\title{Galois descent for real spectra}
\author{Romie Banerjee}
\date{\today}
\address{Department of Mathematics, Indian Institute of Science Education and Research, Bhopal, India}
\email{romie@iiserb.ac.in}
\keywords{Real-oriented cohomology, Galois extensions of ring spectra, effective descent for modules, Barr-Beck-Lurie comonadicity}
\subjclass[2010]{13B05, 18C15, 55N20, 55N91, 55P42, 55P43}
\begin{document}

\begin{abstract}
We prove analogs of faithfully flat descent and Galois descent for categories of modules over $E_{\infty}$-ring spectra using the $\infty$-categorical Barr-Beck theorem proved by Lurie. In particular, faithful $G$-Galois extensions are shown to be of effective descent for modules. Using this we study the category of $ER(n)$-modules, where $ER(n)$ is the $\mathbb{Z}/2$-fixed points under complex conjugation of a generalized Johnson-Wilson spectrum $E(n)$. In particular, we show that $ER(n)$-modules is equivalent to $\mathbb{Z}$/2-equivariant $E(n)$-modules as stable $\infty$-categories. 
\end{abstract}

\maketitle
\tableofcontents

\theoremstyle{definition}
\newtheorem{definition}{Definition}[section]
\newtheorem*{Proofformal}{Proof of Theorem \ref{formal}}
\newtheorem{example}{Example}[section]
\newtheorem{remark}{Remark}[section]

\theoremstyle{plain}
\newtheorem{prop}{Proposition}[section]
\newtheorem{lemma}{Lemma}[section]
\newtheorem{theorem}{Theorem}[section]

\newtheorem{cor}{Corollary}[section]

\section{Introduction}

Let $K\subset L$ be a finite Galois extension of fields with Galois group $\hbox{Gal}(L|K)=G$. Then the map $K\to L$ is of effective descent for modules. This means any $K$-module is equivalent to an $L$-module along with some descent data. More precisely the map 
$$\xymatrix{
K\hbox{- vector spaces} \ar[rr]^{-\otimes_KL} &&G-\left(L\hbox{- vector spaces}\right)\\
}$$
is an equivalence of categories. Here $G-\left(L\hbox{- vector spaces}\right)$ is the category of $L$- vector spaces $N$ with a $G$-action which is semilinear in the sense that $\sigma(cx) = \sigma(c)\sigma(x)$ for all $\sigma \in G$, $c\in L$ and $x\in N$. This is classical Galois descent for fields. A similar descent formalism exists for Galois extensions of commutative rings as well.

Rognes (in \cite{Rognes}) has defined Galois extensions in the category of $E_{\infty}$-ring spectra. In the first part of this paper we will prove Galois descent for {\it faithful} Galois extensions of ring spectra. In particular we show that if $A\to B$ is a faithful $G$-Galois extension of $E_{\infty}$-ring spectra and $G$ is a finite group then the map
$$\xymatrix{
A\hbox{-mod} \ar[rr]^{-\wedge_AB} &&G-\left(B\hbox{-mod}\right)\\
}$$ is an equivalence of stable $\infty$-categories. Here $G-\left(B\hbox{-mod}\right)$ is the category of $G$-equivariant $B$-modules (Def.\ref{semilinear}). The main tool that we use is the comonadic formulation for descent in $\infty$-categories and the Barr-Beck-Lurie criteria for comonadicity. Along the way we also prove an analog of faithfully flat descent for $E_{\infty}$-rings.

Some examples of global faithful Galois extensions of $E_{\infty}$-rings are the following: 

\begin{itemize}
\item $HR \to HS$ where $R\to S$ is a Galois extension of commutative rings. The Galois group is $\hbox{Gal}(S|R)$. (\cite[Lemma 4.2.5]{Rognes})

\item $KO\to KU$ with Galois group $\mathbb{Z}/2$ (\cite[Prop.5.3.1]{Rognes})

\item $TMF[1/n] \to TMF(n)$ with Galois group $GL_2(\mathbb{Z}/n\mathbb{Z})$(\cite{MM})

\end{itemize}

In the second part of this paper we show that Real Johnson-Wilson theories (at the prime $2$) produce examples of faithful $\mathbb{Z}/2$-Galois extensions. Complex conjugation acts on the complex $K$-theory spectrum $KU$ and the homotopy fixed points of this action is $KO$. The complex orientation $MU \rightarrow KU$ is equivariant with respect to this $\mathbb{Z}/2$-action. Hu and Kriz (\cite{HK}) have realized this as a map of genuine $\mathbb{Z}/2$-equivariant spectra $MU_{\mathbb{R}} \rightarrow K_{\mathbb{R}}$ where $K_{\mathbb{R}}$ is Atiyah's {\it real} $K$-theory (\cite{Atiyah}). More generally, let $X$ be any $2(2^n-1)$-periodic {\em generalized} Johnson-Wilson theory (see section \ref{generalized}) with coefficient ring $\mathbb{Z}_{(2)}[u_1,\ldots,u_n^{\pm1}]$ with $|u_i|=2(2^i-1)$ and orientation $MU \to X$ with kernel $\langle u_i, i>n \rangle$. Then Hu and Kriz constructs an equivariant refinement of this orientation to a map $$MU_{\mathbb{R}} \to X_{\mathbb{R}}.$$ We shall call $X_{\mathbb{R}}$ a {\em Real generalized Johnson-Wilson spectrum} and its $\mathbb{Z}/2$ homotopy fixed points $XR$ a {\em real generalized Johnson-Wilson spectrum}. 

We show the existence of a fibration (Thm.\ref{KWfib})
\begin{equation}\label{fibration}
\xymatrix{
\Sigma^{\lambda(n)}XR \ar[r]^{x(n)} &XR \ar[r] &X\\
}
\end{equation}
where the element $x(n)$ has degree $\lambda(n) = 2^{2n+1} -2^{n+2}+1$ and is nilpotent with $x(n)^{2^{n+1}-1}=0$. This is analogous to the Kitchloo-Wilson fibration \cite[Thm.1.7]{KW1} relating the classical Johnson-Wilson theories $E(n)$ to their real counterparts. Using this fibration we can add to our list of faithful Galois extensions: 
\begin{itemize}
\item $XR \to X$ with Galois group $\mathbb{Z}/2$ (Thm.\ref{faith}) when $X$ is a generalized Johnson-Wilson theory admitting a $E_{\infty}$-ring structure:
\end{itemize}

The main results of this paper can be summarized as follows:

\begin{theorem}
\hspace{3mm}

\begin{enumerate}

\item Let $f:A\to B$ be a map of $E_{\infty}$-rings so that $B$ is faithful and dualizable over $A$. Then the map $f$ is of effective descent for modules (Section \ref{fdd}, Thm.\ref{amit}). In particular, there is an equivalence of stable $\infty$-categories $$A\hbox{-mod} \simeq \varprojlim (B^{\wedge_A^n}\hbox{-mod}).$$

\item Any faithful $G$-Galois extension $A \to B$ of $E_{\infty}$-rings ($G$ finite) is of effective descent for modules. In particular, there is an equivalence of stable $\infty$-categories (Section \ref{galois}, Thm.\ref{GaloisDesc}) $$A\hbox{-mod} \simeq (B\hbox{-mod})^{hG}.$$ 

\end{enumerate}
\end{theorem}

\begin{theorem}
 Let $X$ be a generalized Johnson-Wilson spectrum admitting an $E_{\infty}$-ring structure and $X_{\mathbb{R}}$ the Real spectrum associated with it. Let $XR$ denote the homotopy fixed points spectrum. Then the canonical map $XR\to X$ is a faithful $\mathbb{Z}/2$-Galois extension (Thm. \ref{faith}). As a consequence, there is an equivalence of stable $\infty$-categories $$XR\hbox{-mod} \simeq (X\hbox{-mod})^{h\mathbb{Z}/2}.$$

\end{theorem}

\begin{remark}
A similar Galois descent result has appeared in Meier's thesis (\cite[Prop.6.2.6]{Meier}). The proof makes use of the main result of \cite{SS}.

The only examples of generalized Johnson-Wilson spectra ($p=2$) admitting $E_{\infty}$ structures that are known of are $KU_{(2)}$ for $n=1$ and ${TMF_1(3)}_{(2)}$ for $n=2$ (\cite[Thm.1.1 and Prop.8.3]{LN}). However, the $K(n)$ localized Johnson-Wilson spectra $E(n)^{\wedge}_{I_n} = L_{K(n)}E(n)$ all admit essentially unique $E_{\infty}$-ring structures, by work of Baker-Richter (\cite{BR}). 

\end{remark}

\subsection{Organization} The paper is organized into two parts. 

In \ref{classdesc} we review the classical descent theory for a map of commutative rings. In \ref{stable} we recall the essential properties of small stable $\infty$-categories and in \ref{desc} we define what it means for a map of $E_{\infty}$-rings to be of effective descent for modules, using the language of $\infty$-categories. In \ref{comonads} we provide definitions of $\infty$-comonads and their $\infty$-category of comodules and state the Barr-Beck-Lurie criteria for comonadicity. In \ref{hdd} we show that the $\infty$-category of descent data associated to a map of $E_{\infty}$-rings is equivalent to a category of comodules over an $\infty$-comonad (Thm.\ref{hdd=comod}). In \ref{fdd} we apply the Barr-Beck-Lurie criteria to show that a map $A\to B$ of $E_{\infty}$-rings is of effective descent for modules if the extension is faithful and $B$ is dualizable over $A$. In \ref{galois} we show that if furthermore the map is Galois then category of descent data comes from a $G$-equivariant structure (Thm.\ref{GaloisDesc}). 

In sections \ref{Z2} and \ref{real} we recall some facts about $\mathbb{Z}/2$-equivariant spectra and the construction of real-oriented spectra. In section \ref{generalized} we define real versions of generalized Johnson-Wilson spectra. In section \ref{fib} we construct the fibration (\ref{fibration}) and finally we show (in Thm.\ref{faith}) that $XR \to X$ is a faithful Galois extension when $X$ is a generalized Johnson-Wilson theory admitting a $E_{\infty}$-ring structure. The proof depends on the computation of  homotopy of $X_{\mathbb{R}}$ using the Borel spectral sequence. This is done in the appendix \ref{comps}.

\subsection{Acknowledgements} I have benefited greatly from conversations with Nitu Kitchloo, Igor Kriz, Lennart Meier, Jack Morava and Andrew Salch. I am also indebted to Mark Behrens and an anonymous referee for making valuable comments in the earlier drafts of the paper.

\section{Descent for modules}

\subsection{Classical descent theory}\label{classdesc}

Let $f:A\to B$ be a map of commutative rings. The question of descent is the following: given a $B$-module $N$, what data on $N$ determines an $A$-module $M$, together with an isomorphism of $B$-modules $M\otimes_AB \simeq N$?

Given a $B$-module $N$ the descent data for $N$ is necessary gluing data for the module to descend to a module over $A$. The descent data can be defined precisely in the following way. Let us begin with the following  truncated semi-cosimplicial diagram.

\begin{equation}\label{CB}
\xymatrix{
B \ar@<1ex>[r]^{\phi_1} \ar@<-1ex>[r]^{\phi_2} &B\otimes_AB \ar[r]^{\phi_{12}} \ar@<2ex>[r]^{\phi_{23}} \ar@<-2ex>[r]^{\phi_{13}} &B\otimes_AB\otimes_AB\
}
\end{equation}

The maps are $\phi_1= B\otimes 1, \phi_2 = 1\otimes  B, \phi_{12} = B\otimes B\otimes 1, \phi_{23} = 1\otimes B \otimes B$ and $\phi_{13} = B \otimes 1 \otimes B$. The diagram is semi-cosimplicial in the sense the following equalities hold: 

\begin{equation}
\begin{split}
\phi_{12} \phi_1 = \phi_{13}\phi_1\\
\phi_{12} \phi_2 = \phi_{23} \phi_1\\
\phi_{13} \phi_2 = \phi_{23} \phi_2\\
\end{split}
\end{equation}

A descent datum for a $B$ module $N$ is the following:

\begin{itemize}
\item an isomorphism $\phi: \phi_1^*N \simeq \phi_2^*N$ of $B\otimes_AB$-modules,
\item satisfying a cocycle condition: $\phi_{13}^*\phi = \phi_{23}^*\phi \circ \phi_{12}^*\phi$ in the category of $B\otimes_AB\otimes_AB$-modules.
\end{itemize}

The $B$-modules together with descent data form a category $Desc(f)$. If $M$ is an $A$-module and $N = M\otimes_AB$ then there is an obvious descent datum from the isomorphisms $\phi:\phi_1^*N = B\otimes_AN \simeq B\otimes_AB\otimes_AM \simeq N \otimes_AB = \phi_2^*N$. 
There is a map $$\hbox{Mod}(A) \to Desc(f)$$ given by $M \mapsto (M\otimes_AB,\phi)$. The map $f:A\to B$ is said to have {\em effective descent} for modules when this is an equivalence of categories. 

The category of descent data can be described as the limit of a truncated semi-cosimplicial category (induced by (\ref{CB})) in the $2$-category of categories.

$$Desc(f) = \varprojlim\left( 
\xymatrix{\hbox{Mod}(B) \ar@<1ex>[r] \ar@<-1ex>[r] &\hbox{Mod}(B\otimes_AB) \ar[r] \ar@<2ex>[r] \ar@<-2ex>[r] &\hbox{Mod}(B\otimes_AB\otimes_AB) 
}
\right)
$$

There is another description of the category of descent data as the category of comodules over a {\em comonad} on $\hbox{Mod}(B)$. Recall (see \cite{MacLane}) that a comonad $K$ on a category $\mathcal{D}$ is a coalgebra object in the functor category $\hbox{End}(\mathcal{C})$ with respect to the composition monoidal structure. Let $\mathcal{D}_K$ denote the category of $K$-comodules in $\mathcal{D}$. The Eilenberg-Moore adjunction $(U_K\dashv F_K):\mathcal{D}_K \to \mathcal{D}$ is the final representation of $K$; for any other representation of $K$ via a pair of adjoint functors $(F \dashv G): \mathcal{C} \rightleftarrows \mathcal{D}$ (so that $K = F\circ G$), there is a canonical map $\mathcal{C} \to \mathcal{D}_K$. The functor $F$ is said to be {\em comonadic} if this is an equivalence of categories. The Barr-Beck theorem gives necessary and sufficient conditions for the comonadicity of $F$ (\cite[VI.7]{MacLane}).

The map $f:A \to B$ induces a pair of adjoint functors $$(f^* \dashv f_*):\hbox{Mod}(A) \rightleftarrows \hbox{Mod}(B).$$ Here $f_* = -\otimes_AB$ and $f_*$ is the forgetful function. Let $\hbox{Mod}(B)_K$ be the Eilenberg-Moore category of comodules over the comonad $K=f_*f^*$ on $\hbox{Mod}(B)$. There is a forgetful functor $U:Desc(f) \to \hbox{Mod}(B)$ which forgets the descent data. The functor $U$ admits a {\em right} adjoint $F$, $F(M) = (f_*f^*(M), \phi)$ and $F\circ U = K$. So $$(U \dashv F): Desc(f) \rightleftarrows \hbox{Mod}(B)$$ is a presentation of the comonad $K$. Therefore by the universal property of the Eilenberg-Moore representation, there is a canonical map $ Desc(f) \to \hbox{Mod}(B)_K$.

\begin{prop}\label{comonadicdescent} The canonical map $Desc(f) \to \hbox{Mod}(B)_K$ is an equivalence of categories, i.e. $U$ is comonadic.
\end{prop}

\begin{proof}
From the square below

$$\xymatrix{
\hbox{Mod}(A) \ar@<1ex>[rr]^{f^*} \ar@<1ex>[d]^{f^*} &&\hbox{Mod}(B) \ar@<1ex>[ll]^{f_*} \ar@<1ex>[d]^{\phi_1^*, \phi_2^*} \\
\hbox{Mod}(B) \ar@<1ex>[rr]^{\phi_1^*, \phi_2^*} \ar@<1ex>[u]^{f_*} &&\hbox{Mod}(B\otimes_A B) \ar@<1ex>[ll]^{{\phi_1}_*, {\phi_2}_*} \ar@<1ex>[u]^{{\phi_1}_*, {\phi_2}_*}
}$$

we have the following isomorphisms of comonads on $\hbox{Mod}(B)$: 

\begin{equation}\label{BC1} K = f_*f^* \simeq {\phi_1}_*\phi_2^* \simeq {\phi_2}_*\phi_1^* \end{equation}

A $K$-comodule structure on $N$ is a given by a $B$-map $N \to N\otimes_AB = f_*f^*N \simeq {\phi_1}_*\phi_2^*N \simeq {\phi_2}_*\phi_1^*N$. So by adjunction, the comodule structure map $a$ gives the descent datum $\phi:\phi_1^*N \to \phi_2^*N$. 
\end{proof}

The question of effective descent therefore is equivalent to the question of whether the map $$f^*:\hbox{Mod}(A) \to \hbox{Mod}(B)$$ is comonadic, which may be verified by applying the theorem of Barr and Beck. 

\begin{theorem} (Grothendieck) Let $f:A\to B$ be faithfully flat, then $f^*$ is comonadic, hence $f$ is of effective descent for modules. 
\end{theorem}

Our aim is to obtain a homotopical version of this theorem, where we can replace the abelian categories of modules with their derived categories. However it is known that one cannot glue objects in the derived category of a cover to obtain objects in the derived category of the base. What one can do instead is consider the more enriched stable $\infty$-category or dg-category versions of the derived categories. In this paper we will work with stable $\infty$-categories as they contain important examples from stable homotopy theory that we want our results to apply to. 

\subsection{Stable $\infty$-categories}\label{stable}

By an $\infty$-category we shall mean an $(\infty,1)$-category modelled using a {\em weak-Kan complex}. This theory was first developed by Boardman-Vogt and Joyal and later in great detail by Lurie in \cite{HTT} and \cite{HA}. These will be our main references.

\begin{definition}(\cite[Def.1.1.2.4]{HTT})
A simplicial set $K$ is an {\em $\infty$-category} if it satisfies the following condition: for any $0<i<n$, any map $f_0:\Lambda^n_i \to K$ admits (possibly non-unique) extension $f:\Delta^n \to K$. Here $\Lambda_i^n \subseteq \Delta$ denotes the $i$-th horn, obtained from the simplex $\Delta^n$ by deleting the face opposite the $i$-th vertex.
\end{definition}

Let $K$ be a simplicial set underlying an $\infty$-category $\mathcal{C}$. The objects of $\mathcal{C}$ are the elements of $K_0$, the morphisms of $\mathcal{C}$ are the elements of $K_1$. The hom set $\hbox{Maps}_{\mathcal{C}}(x,y)$ is a Kan complex. So every $\infty$-category has an underlying simplicial category.

A {\em functor} between $\infty$-categories is a map of simplicial sets. The functors betwen $\infty$-categories $\mathcal{C}$ and $\mathcal{D}$ assemble in an $\infty$-category $\hbox{Fun}(\mathcal{C},\mathcal{D})$. We say a functor is an {\em equivalence of $\infty$-categories} when the map of the underlying simplicial categories is a Dwyer-Kan equivalence. The {\em homotopy category} of $\mathcal{C}$ is the homotopy category of the underlying simplicial category.

\begin{definition}(\cite[Chapter 3]{HTT})
Let $\mathcal{C}\hbox{at}^{\Delta}_{\infty}$ be the simplicial category whose objects are small $\infty$-categories. Given two $\infty$-categories $\mathcal{C}$ and $\mathcal{D}$ define the mapping space $\hbox{Maps}_{\mathcal{C}\hbox{at}^{\Delta}_{\infty}}(\mathcal{C},\mathcal{D})$ to be the maximal Kan complex contained in the $\infty$-category of functors $\hbox{Fun}(\mathcal{C},\mathcal{D})$. The $\infty$-category $\mathcal{C}\hbox{at}_{\infty}$ is defined to be the simplicial nerve $N(\mathcal{C}\hbox{at}^{\Delta}_{\infty})$.
\end{definition} 

The $\infty$-category $\mathcal{C}\hbox{at}_{\infty}$ admits small limits (\cite[Section 3.3.3]{HTT}).

\begin{definition}(\cite[Def.1.1.1.9]{HA})
An $\infty$-category $\mathcal{C}$ is {\em stable} if 
\begin{enumerate}
\item There is a zero object $0 \in \mathcal{C}$
\item Every morphism in $\mathcal{C}$ has a fiber and a cofiber
\item A triangle in $\mathcal{C}$ is a fiber sequence if and only it is a cofiber sequence
\end{enumerate}
\end{definition}

The homotopy category of a stable $\infty$-category is canonically triangulated (\cite[1.1.2]{HA}). So stable $\infty$-categories can be thought of as natural enrichments of triangulated categories. 

Some examples of stable $\infty$-categories:

\begin{itemize}
\item For $R$ an ordinary commutative ring, the $\infty$-category of unbounded chain complexes of modules over $R$ is stable \cite[1.3.5]{HA}, called the derived category $\mathcal{D}(R)$. Its homotopy category is the derived category $D(R)$ of the ring $R$. 

\item For $X$ a scheme or an algebraic stack one can assign a stable $\infty$-category $QC(X)$ which has the usual unbounded quasicoherent derived category $D_{qc}(X)$ as the homotopy category (see \cite{integral}). 

\item For a $E_{\infty}$-ring $A$, the $\infty$-category of $A$-modules $A$-mod is stable \cite[1.4]{HA}. When $A=S^0$, the homotopy category is the classical stable homotopy category. When $A=HR$ for a discrete ring $R$, the homotopy category is the derived category $D(R)$ \cite[Thm. 4.2.4]{EKMM}. 
 
\end{itemize}
 
There is a good notion of homotopy limits in the $\infty$-category of stable $\infty$-categories. This is important for the descent formalism we develop later. We make precise statements here.

Given two stable $\infty$-categories $\mathcal{C}$ and $\mathcal{D}$, an exact functor between them is an $\infty$ functor that preserves $0$ and fiber sequences. The identity functor is exact and composition of exacts functors is exact. This gives us the following definition.

\begin{definition}
The $\infty$-category $\mathcal{C}\hbox{at}^{Ex}_{\infty}$ is the subcategory of $\mathcal{C}\hbox{at}_{\infty}$ whose objects are small stable $\infty$-categories and morphisms are exact functors.
\end{definition}

\begin{theorem}(\cite[Thm. 1.1.4.4]{HA}) The $\infty$-category $\mathcal{C}\hbox{at}_{\infty}^{Ex}$ admits small limits and $\mathcal{C}\hbox{at}^{Ex}_{\infty}\subseteq \mathcal{C}\hbox{at}_{\infty}$ preserves small limits.
\end{theorem}

\subsection{Higher descent theory}\label{desc}

Let $(\hbox{Sp},\wedge)$ denote the symmetric monoidal stable $\infty$-category of spectra. Denote by $\hbox{CAlg}(\hbox{Sp})$ the category of $E_{\infty}$-rings. If $A$ is an $E_{\infty}$ ring, the category $A\hbox{-mod}$ is symmetric monoidal by the relative smash product $\wedge_A$. Denote by $\hbox{CAlg}(A\hbox{-mod})$ the category of commutative $A$-algebras.

Given a map of $f:A\to B$ of $E_{\infty}$-rings there is a map of stable $\infty$-categories $$f^*:A\hbox{-mod} \to B\hbox{-mod}$$ which is defined on the $0$-simplices as $f^*(M) = M\wedge_AB$. This extends (see \cite{HA}) to a $\infty$-functor $$\hbox{QC}: \hbox{CAlg}(\hbox{Sp}) \to \mathcal{C}\hbox{at}^{Ex}_{\infty}.$$

Let $A^{\bullet}:N(\Delta) \to \hbox{CAlg}(\hbox{Sp})$ be a cosimplicial object in $E_{\infty}$-rings. The functor $\hbox{QC}$ applied to $A^{\bullet}$ levelwise produces a cosimplicial stable $\infty$-category $\hbox{QC}(A^{\bullet})$. Since the category $\mathcal{C}\hbox{at}^{Ex}_{\infty}$ closed under small limits, the totalization $\hbox{Tot}(\hbox{QC}(A^{\bullet}))$ is a stable $\infty$-category.

\begin{definition}(\cite[Def. 8.2.1]{Rognes})
Let $f: A \to B$ be a map of $E_{\infty}$-rings. The {\em Amitsur complex} associated with $f$ is a cosimplicial commutative $A$-algebra, $$C^{\bullet}(B/A):N(\Delta) \to \hbox{CAlg}(A\hbox{-mod})$$ with $C^{q}(B/A)=B^{\wedge^{q+1}_A}$, coaugmented by $A \rightarrow B=C^0(B/A)$. The $i$-th coface map, denoted by $\phi_{1\cdots \hat{i}\cdots q}$, is induced by smashing with $A\rightarrow B$ after the first $i$-copies of $B$ and the $j$-th codegeneracy map is induced by smashing with $B\wedge_A B \rightarrow B$ after the first $j$ copies of $B$.
\end{definition}

\begin{definition}\label{descentdata}
Given a map $f:A\to B$ of $E_{\infty}$-rings, the $\infty$-category $\hbox{Tot}(QC(C^{\bullet}(B/A)))$ is the {\em category of descent data} for $f$.
\end{definition}

\begin{remark} The following observation justifies definition \ref{descentdata}. The objects of $\hbox{Tot}(QC(C^{\bullet}(B/A)))$ are in one-one correspondence with commutative diagrams of the following form:

$$\xymatrix{
\Delta^0 \ar[d] \ar[rr]^{d^0,d^1} &&\Delta^1 \ar[d] \ar[rr]^{d^0,d^1,d^2} \ar[d] &&\Delta^2 \ar[d]  &\cdots \\
B\hbox{-mod} \ar[rr]^{\phi_1^*,\phi_2^*} &&B^{\wedge_A^2}\hbox{-mod}\ar[rr]^{\phi_{12}^*,\phi_{23}^*,\phi_{13}^*} &&B^{\wedge_A^3}\hbox{-mod} &\cdots
}$$

We can think of an object of $\hbox{Tot}(QC(C^{\bullet}(B/A)))$ informally as the following data:

\begin{itemize}
\item A $B$-module $N$
\item An equivalence $\phi:\phi_1^*N \simeq \phi_2^*N$ in $B\wedge_AB$-mod
\item A $2$-simplex $\phi_{13}^*\phi \to \phi_{23}^*\phi \circ \phi_{12}^*\phi$ in $B\wedge_AB\wedge_AB$-mod
\item $\cdots$
\end{itemize}

The $k$-simplices of $\hbox{Tot}(QC(C^{\bullet}(B/A)))$ are in one-one correspondence with commutative diagrams 

$$\xymatrix{
\Delta^0\times \Delta^k \ar[d] \ar[rr]^{d^0,d^1} &&\Delta^1\times \Delta^k \ar[d] \ar[rr]^{d^0,d^1,d^2} \ar[d] &&\Delta^2 \times  \Delta^k \ar[d] &\cdots \\
B\hbox{-mod} \ar[rr]^{\phi_1^*,\phi_2^*} &&B^{\wedge_A^2}\hbox{-mod}\ar[rr]^{\phi_{12}^*,\phi_{23}^*,\phi_{13}^*} &&B^{\wedge_A^3}\hbox{-mod} &\cdots
}$$

\end{remark}

\begin{definition}\label{effdesc}
A map $f:A\to B$ of $E_{\infty}$-rings is of {\em effective descent for modules} when the map $$\theta_f:A\hbox{-mod} \to \hbox{Tot}(QC(C^{\bullet}(B/A)))$$ induced from the coaugmentation $A \to B = C^0(B/A)$ is an equivalence of stable $\infty$-categories.
\end{definition}

\subsection{Comonads in $\infty$-categories}\label{comonads}

In this section we discuss the theory of comonads in the $\infty$-categorical setting and the $\infty$-categorical analog of the Barr-Beck theorem as developed by Lurie. The main refererence for this is \cite[section 4]{HA}, in particular \cite[4.7]{HA}.  Lurie develops the theory of monads, we need the dual version of comonads here. We provide definitions of $\infty$-comonads and their $\infty$-categories of comodules.

\begin{definition}(see \cite[Def. 2.1.3.1, Def. 4.2.1.3]{HA})\label{algebra}
\begin{itemize}
\item Given a monoidal $\infty$-category $\mathcal{C}$, there is an {\em $\infty$-category of (associative) algebra objects in $\mathcal{C}$}  denoted by $\hbox{Alg}(\mathcal{C})$. 
\item Let $\mathcal{M}$ be an $\infty$-category left tensored over a monoidal $\infty$-category $\mathcal{C}$. Then there exists an {\em $\infty$-category of left module objects of $\mathcal{M}$} denoted by $\hbox{LMod}(\mathcal{M})$ and a map $$\hbox{LMod}(\mathcal{M}) \to \hbox{Alg}(\mathcal{C}).$$ If $A\in \hbox{Alg}(\mathcal{C})$, then we let $\hbox{LMod}_A(\mathcal{M})$ denote the fiber $\hbox{LMod}(\mathcal{M}) \times_{\hbox{Alg}(\mathcal{C})}\{A\}$. We refer to $\hbox{LMod}_A(\mathcal{M})$ as the {\em $\infty$-category of left $A$-modules in $\mathcal{M}$}.
\end{itemize}

\end{definition}

\begin{definition}(Coalgebras and comodules)
\begin{itemize}
\item Let $\mathcal{C}$ be a monoidal $\infty$-category. Define $\hbox{CoAlg}(\mathcal{C})$ to be $\hbox{Alg}(\mathcal{C}^{op})^{op}$. we refer to this as the {\em $\infty$-category  of (coassociative) coalgebra objects in $\mathcal{C}$}.

\item Let $\mathcal{M}$ be an $\infty$-category left tensored over a monoidal $\infty$-category $\mathcal{C}$. Define $\hbox{LComod}(\mathcal{M})$ to be $\hbox{LMod}(\mathcal{M}^{op})^{op}$. We refer to this as the {\em $\infty$-category  of (left) comodule objects of $\mathcal{M}$}. There is a map of $\infty$-categories $$\hbox{LComod}(\mathcal{M}) \to \hbox{CoAlg}(\mathcal{C}).$$ If $H \in \hbox{CoAlg}(\mathcal{C})$, then we let $\hbox{LComod}_H(\mathcal{M})$ denote the fiber $\hbox{LComod}(\mathcal{M}) \times_{\hbox{CoAlg}(\mathcal{C})}\{H\}$. We refer to $\hbox{LComod}_H(\mathcal{M})$ as the {\em $\infty$-category of left $H$-comodules in $\mathcal{M}$}.
Alternately, $\hbox{LComod}_H(\mathcal{M}) \simeq \hbox{LMod}_H(\mathcal{M}^{op})^{op}$.

\end{itemize}
\end{definition}

\begin{definition}(Comonads and comodules)
Given an $\infty$-category $\mathcal{D}$, the $\infty$-category of functors $\hbox{Fun}(\mathcal{D},\mathcal{D})$ is monoidal and $\mathcal{D}$ is left tensored over $\hbox{Fun}(\mathcal{D},\mathcal{D})$. 

\begin{itemize}
\item A functor $K \in \hbox{Fun}(\mathcal{D},\mathcal{D})$ is a {\em comonad} if $K \in \hbox{CoAlg}(\hbox{Fun}(\mathcal{D},\mathcal{D}))$.
\item There is an $\infty$-category $\hbox{LComod}_K(\mathcal{D})$ of comodules over a comonad $K$ in $\mathcal{D}$.
\end{itemize}
\end{definition}

There is a natural forgetful map $U_K:\hbox{LComod}_K(\mathcal{D}) \to \mathcal{D}$. 

\begin{remark} Informally, a comonad $K$ on an $\infty$-category $\mathcal{D}$ is an endofunctor $K:\mathcal{D}\to \mathcal{D}$ equipped with maps $K \to 1$ and $K \to K\circ K$ which satisfies the usual counit and co-associativity conditions up to coherent homotopy. A comodule over the comonad $K$ is an object $x\in \mathcal{D}$ equipped with a structure map $\eta: x \to K(x)$ which is compatible with the coalgebra structure on $K$, again up to coherent homotopy. The forgetful map takes a comodule to the underlying object in $\mathcal{D}$.\end{remark}

\begin{prop}(see \cite[Prop. 4.7.4.3]{HA})\label{composition}
Given a functor $F:\mathcal{C} \to \mathcal{D}$ of $\infty$-categories which admits a right adjoint $G$. Then the composition $K=F\circ G \in \hbox{Fun}(\mathcal{D},\mathcal{D})$ is a comonad on $\mathcal{D}$. 

There is a canonial map $F':\mathcal{C} \to \hbox{LComod}_K(\mathcal{D})$ so that $F'\circ U_K \simeq F \in \hbox{Fun}(\mathcal{C},\mathcal{D})$.
\end{prop}

\begin{remark} In ordinary categorical setting it is easy to check that the composition $K$ is a comonad on $\mathcal{D}$. However, as Lurie notes in \cite[Remark 4.7.0.4]{HA}, this a not so straightforward in the $\infty$-categorical setting. In order to give a coalgebra structure on the composition $K = F\circ G \in \hbox{Fun}(\mathcal{D}, \mathcal{D})$ it is not enough to give a produce a single natural transformation $K \to K\circ K$ but an infinite system of coherence data, which is not easy to describe explicitly.
\end{remark}

\begin{definition} Let $F:\mathcal{C} \to \mathcal{D}$ be a map of $\infty$-categories that admits a right adjoint $G$ and let $K=F\circ G$ be the composition comonad on $\mathcal{D}$. Then $F$ is said to be {\em comonadic} if the comparison map $F':\mathcal{C} \to \hbox{LComod}_K(\mathcal{D})$ is an equivalence of $\infty$-categories. 
\end{definition}

\begin{theorem}(see \cite[Thm.4.7.4.5]{HA})($\infty$-categorical Barr-Beck theorem)\label{bbl}
Let $F:\mathcal{C}\to \mathcal{D}$ be an $\infty$-functor that admits a right adjoint $G$. Then $F$ is comonadic if and only if $F$ satisfies the following two conditions:
\begin{itemize}
\item[(a)] $F$ reflects equivalences
\item[(b)] Let $U$ be a cosimplicial object in $\mathcal{C}$ which is $F$-split then $U$ admits a limit in $\mathcal{C}$ and the limit is preserved by $F$
\end{itemize}
\end{theorem}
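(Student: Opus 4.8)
The plan is to obtain this statement --- the comonadic form of the Barr--Beck--Lurie theorem --- by dualizing Lurie's proof of monadicity, working throughout with the comonad $K=FG$ and the cosimplicial (cobar) resolutions it manufactures. Write $\Phi\colon\mathcal{C}\to\mathcal{D}_K$ for the comparison functor, $V\colon\mathcal{D}_K\to\mathcal{D}$ for the forgetful functor and $\mathrm{cofree}\colon\mathcal{D}\to\mathcal{D}_K$ for its right adjoint, so that $V\circ\mathrm{cofree}=K$ and $V\circ\Phi=F$. I would first record two purely comonadic facts: that $V$ reflects equivalences; and that for every $x\in\mathcal{D}_K$ the comonadic cosimplicial resolution $\mathrm{coBar}^{\bullet}(x)$, the cosimplicial object of $\mathcal{D}_K$ with $n$-th term $(\mathrm{cofree}\circ V)^{n+1}(x)$ and coaugmentation $x$, satisfies: $V$ carries its coaugmented form to a split cosimplicial object of $\mathcal{D}$ whose limit is $Vx$, and its totalization in $\mathcal{D}_K$ is $x$. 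I would also note the canonical equivalence $\Phi\circ G\simeq\mathrm{cofree}$, obtained by comparing the underlying objects $K(d)=FG(d)$ and the coalgebra structures.

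The implication (i)$\Rightarrow$(ii) is the easy half: assuming $\Phi$ an equivalence, identify $F$ with $V$; then (a) is the conservativity of $V$, and for (b) one observes that an $F$-split cosimplicial object of $\mathcal{D}_K$ has underlying split cosimplicial object in $\mathcal{D}$, hence an absolute limit there, which $V=F$ creates. For (ii)$\Rightarrow$(i) I would construct a right adjoint $\Psi$ to $\Phi$ and show both the unit and the counit are equivalences. Given $x\in\mathcal{D}_K$, the terms of $\mathrm{coBar}^{\bullet}(x)$ lie in the essential image of $\mathrm{cofree}=\Phi\circ G$, so using the explicit form of the resolution one lifts it along $\Phi$ to a cosimplicial object $W^{\bullet}(x)$ of $\mathcal{C}$ with $\Phi W^{\bullet}(x)\simeq\mathrm{coBar}^{\bullet}(x)$; since $F W^{\bullet}(x)\simeq V\,\mathrm{coBar}^{\bullet}(x)$ is split, $W^{\bullet}(x)$ is $F$-split, so by hypothesis (b) it admits a totalization in $\mathcal{C}$ preserved by $F$, and one puts $\Psi(x):=\mathrm{Tot}\,W^{\bullet}(x)$, then checks functoriality and the adjunction $\Phi\dashv\Psi$. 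Next, $\Phi$ too preserves this totalization: the natural map $\Phi\bigl(\mathrm{Tot}\,W^{\bullet}(x)\bigr)\to\mathrm{Tot}\bigl(\Phi W^{\bullet}(x)\bigr)$ becomes an equivalence after applying $V$ --- since $V\Phi=F$ preserves the totalization by (b), while $V$ preserves $\mathrm{Tot}\bigl(\Phi W^{\bullet}(x)\bigr)$ because $\Phi W^{\bullet}(x)\simeq\mathrm{coBar}^{\bullet}(x)$ is $V$-split --- and $V$ reflects equivalences; hence $\Phi\Psi(x)\simeq\mathrm{Tot}\,\mathrm{coBar}^{\bullet}(x)\simeq x$, so the counit $\Phi\Psi\to\mathrm{id}$ is an equivalence. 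For the unit, given $c\in\mathcal{C}$ apply $F$ to the coaugmentation $c\to\Psi\Phi(c)=\mathrm{Tot}\,W^{\bullet}(\Phi c)$; via $F\,\mathrm{Tot}\simeq\mathrm{Tot}\,F W^{\bullet}(\Phi c)$ this is identified with the coaugmentation of a split cosimplicial object, hence an equivalence, and since $F$ reflects equivalences by (a), the unit $c\to\Psi\Phi(c)$ is an equivalence. Thus $\Phi$ is an equivalence, i.e. $F$ is comonadic.

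The formal skeleton above is routine; the real obstacle is the $\infty$-categorical infrastructure it rests on. One must realize $K$ as a coalgebra object of the monoidal $\infty$-category $\mathrm{End}(\mathcal{D})$ and $\mathcal{D}_K$ as its $\infty$-category of comodules; produce $\mathrm{coBar}^{\bullet}(x)$ and its lift $W^{\bullet}(x)$ as genuinely coherent cosimplicial diagrams rather than diagrams of objects satisfying relations only up to homotopy; establish that split cosimplicial objects have limits preserved by every functor; and --- the crux --- prove that $V\colon\mathcal{D}_K\to\mathcal{D}$ creates limits of $V$-split cosimplicial objects, which is what makes the totalization of $\mathrm{coBar}^{\bullet}(x)$ recover $x$. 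All of this is carried out in \cite[\S 4.7 and \S 6.2]{HA}, and the plan is to invoke it; the statement as written is precisely \cite[6.2.2.5]{HA}.
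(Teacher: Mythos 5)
Your outline is a correct dualization of Lurie's monadicity argument, and the paper itself offers no proof of this statement --- it is imported verbatim as \cite[6.2.2.5]{HA} --- so your concluding move of invoking \cite[\S 4.7, \S 6.2]{HA} for the coherence infrastructure and the creation of limits of split cosimplicial objects is exactly what the paper does (minus your sketch, which the paper omits entirely). No gaps; the approach is essentially the same.
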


\subsection{Higher descent data as comodules over an $\infty$-comonad}\label{hdd}

Our aim is to give an $\infty$-categorical version of Prop.\ref{comonadicdescent}. In other words, given a $E_{\infty}$-ring map $f:A\to B$ we want to express the category of descent data in Def.\ref{descentdata} as the category of comodules over a comonad on $B$-mod. (See \cite{Beardsley} for a different perspective on this.)

 Given a map $f:A \to B$ of $E_{\infty}$ rings, the map $f^*:A\hbox{-mod} \to B\hbox{-mod}$ of stable $\infty$-categories admits a right adjoint $f_*$. The compostion $K=f^*\circ f_*$, by Prop.\ref{composition}, is a comonad on $B$-mod. 

\begin{theorem}\label{hdd=comod}
There is an equivalence of stable $\infty$-categories, $$\hbox{Tot}(QC(C^{\bullet}(B/A))) \simeq \hbox{LComod}_K(B\hbox{-mod}).$$ 
\end{theorem}

First we need a few results from Lurie (\cite[Section 4.7.6]{HA}). 

\begin{prop}(see \cite[Prop.4.7.6.1]{HA})\label{preBC}
Let $\mathcal{C}^{\bullet}$ be a cosimplicial $\infty$-category and $F:\varprojlim \mathcal{C}^{\bullet} \to \mathcal{C}^0$ be the natural projection map. If $F$ admits a right adjoint then $F$ satifies the conditions of Thm. \ref{bbl}. i.e. there is a comonad $K$ on $\mathcal{C}^0$ so that there is an equivalence $\varprojlim\mathcal{C}^{\bullet} \simeq \hbox{LComod}_K(\mathcal{C}^0)$.
\end{prop}

\begin{definition}(Right-adjointability)(\cite[Def.4.7.5.13]{HA})\label{rightadjointable}
Given a diagram of $\infty$-categories $\sigma$:

$$\xymatrix{
\mathcal{C} \ar[r]^G \ar[d]_U &\mathcal{D} \ar[d]^V\\
\mathcal{C}' \ar[r] ^{G'}  &\mathcal{D}' 
}$$
and a specified equivalence $\alpha: G'\circ U \simeq V\circ G$. We say $\sigma$ is {\em right adjointable} if $G$ and $G'$ admit right adjoints $H$ and $H'$, and the composition transformation $$U\circ H \to H\circ G' \circ U \circ H \to^{\alpha} H'\circ V \circ G \circ H \to H'\circ V$$ is an equivalence.
\end{definition}

\begin{theorem}(\cite[Thm.4.7.6.2]{HA})\label{BC}
Let $\mathcal{C}^{\bullet}$ be a cosimplicial $\infty$-category. If for every $[m] \to [n]$ in $\Delta$ the diagram 
$$
\xymatrix{
\mathcal{C}^m \ar[r]^{d^0} \ar[d] &\mathcal{C}^{m+1} \ar[d] \\
\mathcal{C}^n \ar[r]^{d^0} \ar[r] &\mathcal{C}^{n+1} \\
}$$
is right adjointable (in particular, $d^0:\mathcal{C}^n \to \mathcal{C}^{n+1}$ admits a right adjoint $H(0)$), then 
\begin{enumerate}
\item the forgetful functor $\varprojlim\mathcal{C}^{\bullet} \to \mathcal{C}^0$ admits a right adjoint
\item The square $$\xymatrix{\varprojlim(\mathcal{C}^{\bullet}) \ar[d]_U \ar[r]^U &\mathcal{C}^0 \ar[d]^{d^1}\\ \mathcal{C}^0 \ar[r]^{d^0} &\mathcal{C}^1}$$ is right adjointable and there is an equivalence $U\circ H \simeq H(0) \circ d^1 \in \hbox{Fun}(\mathcal{C}^0,\mathcal{C}^0)$.
\end{enumerate}
\end{theorem}

\begin{remark} The the right adjointabily of the square in consequence (2) of Thm.\ref{BC} above  condition is similar to (\ref{BC1}).\end{remark}

\begin{proof}(of Thm.\ref{hdd=comod})
We need to show the projection map $p:\hbox{Tot}(QC(C^{\bullet}(B/A))) \to B\hbox{-mod}$ admits a right adjoint $p_!$ so that $p\circ p_! \simeq K$. Then the theorem can be proved by applying Prop.\ref{preBC}.

By Thm.\ref{BC}, $p$ admits a right adjoint if the following diagrams are right adjointable:
$$\xymatrix{
B^{\wedge_A^m}\hbox{-mod} \ar[r]^{d^0} \ar[d] &B^{\wedge_A^{m+1}}\hbox{-mod} \ar[d]\\
B^{\wedge_A^n}\hbox{-mod} \ar[r]^{d^0} \ar[r]^{d^0} &B^{\wedge_A^{n+1}}\hbox{-mod}\\
}$$

where coface maps $d^0$ are basechange along the map $1\wedge\cdots\wedge B :B^{\wedge_A^q} \to B^{\wedge_A^q}$. The maps $d^0$ admits right adjoints; forgetting the $B^{\wedge_A^q}$-module structure, and the right adjointability condition is satisfied. 

Therefore, by Thm.\ref{BC}(1), the map $p$ has a right adjoint, and by Thm.\ref{BC}(2), if $p_!$ is the right adjoint then $p\circ p_! \simeq H(0)\circ d^1$ as comonads over $B$-mod. It only remains to check that $H(0) \circ d_1 \simeq K = f^*\circ f_*$. This can be verified on objects, since for any $B$-module $N$, $(H(0)\circ d_1)(N) \simeq N\wedge_AB$ as a $B$-module.
\end{proof}

\subsection{Faithfully dualizable descent}\label{fdd}

\begin{definition}
Let $A$ and $B$ be $E_{\infty}$-rings.

\begin{enumerate}
\item An extension $A \rightarrow B$ is {\it faithful} if for any $A$-module $M$, $B\wedge_AM \simeq *$ implies $M\simeq *$.

\item Let $M$ be a $A$-module and $D_AM = F_A(M,A)$ the functional dual of $M$. Then $M$ is {\it dualizable} over $A$ if the canonical map $\nu:D_AM \wedge_AM \to F_A(M,M)$ is an equivalence.
\end{enumerate}
\end{definition}

\begin{lemma}(\cite[Lemma 3.3.2]{Rognes})\label{dual}
\begin{enumerate}
\item If $M$ is a dualizable over $A$ and $N$ any $A$-module then the canonical map $D_AM\wedge_AN \to F_A(M,N)$ is an equivalence.
\item If $M$ is dualizable over $A$, then $D_AM$ is also dualizable and the canonical map $M \to D_AD_AM$ is an equivalence. 
\end{enumerate}
\end{lemma}

\begin{prop}\label{fdimpliescomonadic}
Let $f:A \to B$ be a faithful map of $E_{\infty}$-rings and $B$ be dualizable over $A$ then the functor $$f^*:A\hbox{-mod} \to B\hbox{-mod}$$ is comonadic.
\end{prop}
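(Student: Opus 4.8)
The plan is to verify conditions (a) and (b) of the Barr--Beck--Lurie criterion (Theorem \ref{bbl}) for the functor $f^*:A\hbox{-mod}\to B\hbox{-mod}$, whose right adjoint $f_*$ is the forgetful functor sending a $B$-module to its underlying $A$-module. For condition (a), that $f^*$ reflects equivalences: a map $g:M\to N$ of $A$-modules is an equivalence iff its cofibre is contractible, and since $f^*$ is exact, $f^*(\hbox{cofib}(g))\simeq \hbox{cofib}(f^*g)$; so if $f^*g$ is an equivalence then $B\wedge_A\hbox{cofib}(g)\simeq *$, and faithfulness of $f:A\to B$ forces $\hbox{cofib}(g)\simeq *$, i.e. $g$ is an equivalence. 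This step is immediate from the definition of faithful.

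The substantive work is condition (b): any $f^*$-split cosimplicial object $U^{\bullet}$ in $A\hbox{-mod}$ has a limit (totalization) in $A\hbox{-mod}$ that is preserved by $f^*$. The strategy is to reduce the preservation statement to a single transparent fact, namely that smashing with the dualizable $A$-module $B$ commutes with the relevant totalizations. Concretely, I would argue that for a dualizable module $B$ the functor $-\wedge_A B$ is equivalent to $F_A(D_AB,-)$ (using $B\simeq D_AD_AB$ and the standard adjunction/evaluation equivalences in the symmetric monoidal stable $\infty$-category $A\hbox{-mod}$); since $D_AB$ is a compact/dualizable object, mapping out of it commutes with all limits, in particular with totalizations of cosimplicial objects. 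Hence $f^*=-\wedge_AB$ preserves whatever limits exist on the source. It then remains to see that the totalization $\hbox{Tot}(U^{\bullet})$ actually exists in $A\hbox{-mod}$: this follows because $A\hbox{-mod}$ is a stable (indeed presentable) $\infty$-category and therefore admits all limits, so $\hbox{Tot}(U^{\bullet})=\lim_{\Delta}U^{\bullet}$ exists unconditionally; the $f^*$-split hypothesis is then only needed insofar as it guarantees $U^{\bullet}$ is the kind of diagram for which preservation is asserted, and the dualizability argument handles preservation for arbitrary cosimplicial diagrams.

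I expect the main obstacle to be making the dualizability manipulation rigorous at the $\infty$-categorical level: one must identify $-\wedge_A B$ with a corepresentable functor $F_A(D_AB,-)$ compatibly enough to conclude it commutes with cosimplicial limits, which requires care with the internal-hom/tensor adjunctions and the fact that $D_AB$ is again dualizable (so its coevaluation exhibits the needed natural equivalence). An alternative, perhaps cleaner route to condition (b) avoids existence questions entirely by invoking that $A\hbox{-mod}$ has all small limits and then only checking preservation; and preservation of arbitrary limits by $-\wedge_A B$ is exactly the statement that $B$ is a dualizable (equivalently, compact-as-a-tensor-factor) object, which is our hypothesis. With (a) and (b) established, Theorem \ref{bbl} yields that $f^*$ is comonadic, completing the proof.
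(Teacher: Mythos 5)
Your proof is correct and takes essentially the same approach as the paper: condition (a) follows from faithfulness (via the cofibre of the map), and condition (b) is handled by identifying $f^*=-\wedge_AB$ with $F_A(D_AB,-)$ using dualizability of $B$, so that totalizations existing in the presentable stable $\infty$-category $A\hbox{-mod}$ are preserved. One minor clarification: $F_A(X,-)$ preserves all limits for \emph{any} $X$ (it is a right adjoint), so compactness of $D_AB$ plays no role there; dualizability is needed only to produce the natural equivalence $B\wedge_A-\simeq F_A(D_AB,-)$.
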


\begin{proof}
Condition (i) of Thm.\ref{bbl} is satisfied since $A\to B$ is faithful. As for condition (ii), suppose $U^{\bullet}$ is a cosimplicial object in $A\hbox{-mod}$, then $\hbox{Tot}\,U^{\bullet} \in A\hbox{-mod}$. To show that $f^*$ preserves totalization we need $B$ is dualizable over $A$. To see this we note the following.

\begin{equation*}
\begin{split}
f^*\hbox{Tot}(U^{\bullet}) &= B\wedge_A\hbox{Tot}\,U^{\bullet}\\
 &\simeq F_A(D_AB, \hbox{Tot}\,U^{\bullet})\\
&\simeq \hbox{Tot}\,F_A(D_AB,U^{\bullet})\\
&= \hbox{Tot}(B\wedge_AU^{\bullet})
\end{split}
\end{equation*} 

The equivalences follow from Lemma \ref{dual}.

\end{proof}

As a corollary we get faithfully dualizable descent for modules:

\begin{theorem}\label{amit}
Let $f:A\to B$ be a faithful map of $E_{\infty}$-rings and $B$ be dualizable over $A$. Then $A\to B$ is of effective descent for modules.
\end{theorem}
\begin{proof}
Prop.\ref{fdimpliescomonadic} and Thm.\ref{hdd=comod}.
\end{proof}

\subsection{Galois extensions of $E_{\infty}$-ring spectra}

\begin{definition}\label{grCB} Let $\mathcal{C}$ be an $\infty$-category and let $X$ an object in $\mathcal{C}$ with an action by a finite group $G$, given by a map $X:BG \to \mathcal{C}$ of $\infty$-categories. Then the associated {\em group cobar complex} is a cosimplicial object $$C^{\bullet}(G;X): N(\Delta) \to \mathcal{C}$$ where $C^q(G;X)= \Pi_{G^q}X$ and the coface and codegeneracies are induced by the structure maps of $BG$. The limit $\hbox{Tot}(C^{\bullet}(G;X)) = X^{hG}$. 
\end{definition}

\begin{remark} If $B$ is a commutative $A$-algebra for an $E_{\infty}$-ring $A$ and $G$ acts on $B$ by $A$-algebra maps, then the cobar complex of Def.\ref{grCB} is a cosimplicial object $$C^{\bullet}(G;B): N(\Delta) \to \hbox{CAlg}(A\hbox{-mod})$$ and is coaugmented by $A\to B = C^0(B;G)$. This induces a map $A \to \hbox{Tot}(C^{\bullet}(B;G)) = B^{hG}$.
\end{remark} 

\begin{definition}(Rognes \cite{Rognes}) Let $f:A\to B$ be a map of $E_{\infty}$-rings and $G$ a finite group acting on $B$ through $A$-algebra maps. Then $f$ is a $G$-Galois extension if the canonical maps 
\begin{itemize}
\item[(i)] $i: A\rightarrow B^{hG}$ 
\item[(ii)] $h:B\wedge_A B \rightarrow \Pi_G B$ (informally given by $(b_1\wedge b_2) \mapsto \{b_1\wedge g(b_2)\}_{g\in G}$)
\end{itemize}
are equivalences. 
\end{definition}

\begin{definition}
Let $A\to B$ be a map of $E_{\infty}$-rings and let $G$ act on $B$ by $A$-algebra maps. Then there is a map of cosimplicial $A$-algebras $$h^{\bullet}:C^{\bullet}(B/A) \to C^{\bullet}(G;B)$$ given in codegree $q$ by the map $h^{q}:B^{\wedge_A^{q+1}} \to \Pi_{G^q}B$ given symbolically by $$b_0\wedge \ldots \wedge b_q \mapsto \{b_0\wedge g_1(b_1) \wedge \ldots \wedge g_q(b_q)\}_{(g_1,\ldots,g_q)\in G^q}.$$
\end{definition}

\begin{lemma}(\cite[Lemma 8.2.7]{Rognes})\label{galoisamit}
If $A\to B$ is a $G$-Galois extension then the map $h^{\bullet}$ is a codegreewise equivalence. 
\end{lemma}

\begin{prop}(\cite[6.2.1]{Rognes})\label{dualizable}
If $f:A\to B$ is a Galois extension then $B$ is dualizable over $A$.
\end{prop}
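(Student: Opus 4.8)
The plan is to produce a finite, explicit dualizing object for $B$ over $A$ directly from the Galois conditions, using that the group $G$ is finite. First I would recall that $B$ is dualizable over $A$ precisely when the canonical evaluation and coevaluation maps exhibit $B$ as a dualizable object of the symmetric monoidal $\infty$-category $A\hbox{-mod}$; equivalently, the natural map $D_AB\wedge_A M \to F_A(B,M)$ is an equivalence for all $A$-modules $M$, or just that $B\to D_AD_AB$ is an equivalence together with $D_AB$ being a perfect witness. So the task reduces to identifying $D_AB = F_A(B,A)$ with something manifestly finite.

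The key step is to exploit condition (ii), $h: B\wedge_A B \xrightarrow{\simeq} F(G_+,B)$. Since $G$ is finite, $F(G_+,B)\simeq \prod_{g\in G} B \simeq \bigvee_{g\in G} B$, the finite product/coproduct of copies of $B$ as a $B$-module (via one of the two $B$-module structures on $B\wedge_A B$, say the left one). In particular $B\wedge_A B$ is a free, hence dualizable and perfect, module over $B$ on finitely many generators. Now I would invoke faithfulness: base change along $A\to B$ is conservative and symmetric monoidal, so to check that $B$ is dualizable over $A$ it suffices to check after smashing with $B$, i.e. to check that $B\wedge_A B$ is dualizable over $B$ — which we have just done. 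More precisely, one uses that dualizability of an $A$-module $B$ can be detected on the finite diagram of evaluation/coevaluation data, and that $f^*$ being a conservative symmetric monoidal functor reflects the property ``the coevaluation/evaluation composite is an equivalence''. This is exactly the style of argument already used in the proof preceding Corollary \ref{amit}, where faithfulness plus dualizability interact.

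Concretely, I would carry it out as follows. (1) Form the candidate dual $D_AB = F_A(B,A)$ and the evaluation map $\varepsilon\colon D_AB\wedge_A B\to A$ and a coevaluation $\eta\colon A\to B\wedge_A D_AB$; dualizability amounts to the two triangle identities holding, equivalently to $\eta$ and $\varepsilon$ witnessing an adjunction $B\dashv D_AB$ internally. (2) Apply $f^*=B\wedge_A(-)$. By (ii), $f^*B = B\wedge_A B\simeq \prod_{g\in G}B$ as a $B$-module, and $\prod_{g\in G}B = \bigoplus_{g\in G}B$ is finite free over $B$, hence dualizable over $B$ with dual $\bigoplus_{g\in G}B$ again; in particular the base-changed triangle identities hold. (3) Since $f^*$ is symmetric monoidal, it carries $\varepsilon,\eta$ to the corresponding evaluation/coevaluation for $f^*B$ over $B$ (here one checks $f^*D_AB\simeq D_B(f^*B)$, which again follows from $f^*B$ being perfect over $B$ so that the canonical map $B\wedge_A F_A(B,A)\to F_B(B\wedge_A B, B)$ is an equivalence). (4) The triangle identities for $B$ over $A$ are equivalences of maps of $A$-modules; by faithfulness of $f^*$ (it reflects equivalences) they hold already over $A$. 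Hence $B$ is dualizable over $A$.

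The main obstacle I anticipate is step (3): one must be careful that base change commutes with the internal function object $F_A(B,-)$ on the nose, i.e. that the natural map $B\wedge_A F_A(B,M)\to F_B(B\wedge_A B,\,B\wedge_A M)$ is an equivalence. This is not automatic for arbitrary modules, but it holds here because $B\wedge_A B$ is \emph{already} known to be perfect (finite free) over $B$ by condition (ii), and the projection-formula/compatibility map is an equivalence whenever the module being ``dualized against'' is dualizable. Once that compatibility is in hand, the rest is formal manipulation with dualizable objects in a symmetric monoidal $\infty$-category together with conservativity of $f^*$, and no genuine homotopy-theoretic input beyond finiteness of $G$ and the two Galois equivalences $i$ and $h$.
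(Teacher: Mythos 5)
There is a genuine gap here --- in fact the paper gives no argument of its own for this statement (it simply cites Rognes' Proposition 6.2.1), so your proposal has to stand alone, and it does not. The first problem is step (1): you ``form \ldots\ a coevaluation $\eta\colon A\to B\wedge_A D_AB$'' as though such a map were available. The evaluation $D_AB\wedge_AB\to A$ always exists, but a coevaluation satisfying the triangle identities exists precisely when $B$ is dualizable, which is what is to be proved; producing $\eta$ is the entire content of the proposition. Rognes constructs it from the fixed-point equivalence $i\colon A\simeq B^{hG}$ together with the transfer/norm for the finite group $G$, combined with $h$. Your argument never uses condition (i) at all --- only $h$ and faithfulness --- which is a structural warning sign, and faithfulness is not even a hypothesis of the statement (Rognes proves dualizability for arbitrary global Galois extensions; whether every Galois extension is faithful was left open by him), so at best you would be proving a weaker claim.

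The second problem is the descent step, which you correctly flag as the main obstacle but then dismiss with a justification that runs backwards. The comparison map $B\wedge_AF_A(B,M)\to F_B(B\wedge_AB,\,B\wedge_AM)$ is an equivalence when the module being dualized against --- $B$ as an $A$-module --- is dualizable, not when its base change is dualizable over $B$; assuming the former is circular. (Discrete analogue of the failure: for $\mathbb{Z}\to\mathbb{Q}$ and $M=\mathbb{Q}$, the base change $\mathbb{Q}\otimes_{\mathbb{Z}}\mathbb{Q}$ is free of rank one over $\mathbb{Q}$, yet $\mathrm{Hom}_{\mathbb{Z}}(\mathbb{Q},\mathbb{Z})\otimes_{\mathbb{Z}}\mathbb{Q}=0\neq\mathrm{Hom}_{\mathbb{Q}}(\mathbb{Q},\mathbb{Q})$.) More broadly, a conservative symmetric monoidal functor does not reflect dualizability: conservativity reflects ``is an equivalence,'' not ``these two maps agree,'' and the data you would need to transport (the triangle identities over $A$) do not exist before base change. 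What would rescue the strategy is knowing that $A\to B$ is of effective descent, since dualizability does descend along such maps; but in this paper effective descent for $A\to B$ is Corollary \ref{amit}, which is itself deduced from the dualizability of $B$ over $A$, so that route is circular within the paper's logic. The correct proof must construct the duality data explicitly from both Galois conditions and the finiteness of $G$, as Rognes does.
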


\subsection{Higher Galois descent}\label{galois}

Let $f:A\to B$ be a Galois extension of ordinary commutative rings. The truncated semi-cosimplicial ring of (\ref{CB}) is isomorphic to the following truncated semi-cosimplicial ring,

$$\xymatrix{
B \ar@<1ex>[r]^{\phi_1} \ar@<-1ex>[r]^{\phi_2} &\Pi_GB \ar[r]^{\phi_{12}} \ar@<2ex>[r]^{\phi_{23}} \ar@<-2ex>[r]^{\phi_{13}} &\Pi_{G\times G}B\\
}$$

where $\phi_1(x) = \left(g \mapsto gx \right)_{g \in G}$ and $\phi_2(x) = \left(g\mapsto x\right)_{g \in G}$. The category of descent data $$Desc(f) = \varprojlim \left( \xymatrix{
\hbox{Mod}(B) \ar@<1ex>[r]^{\phi_1^*} \ar@<-1ex>[r]^{\phi_2^*} &\Pi_G\hbox{Mod}(B) \ar[r]^{\phi_{12}^*} \ar@<2ex>[r]^{\phi_{23}^*} \ar@<-2ex>[r]^{\phi_{13}^*} &\Pi_{G\times G}\hbox{Mod}(B)\\
}\right)$$ can be denoted by $\hbox{Mod}(B)^G$. Given a $B$ module $N$, a descent datum is equivalent to a semi-linear $G$-action on $N$. In this setting, classical Galois descent can be stated as follows.

\begin{theorem}(Galois descent) Let $A\to B$ be a $G$-Galois extension of commutative rings, then there is an equivalence of categories $$\hbox{Mod}(A) \simeq \hbox{Mod}(B)^G.$$
\end{theorem}

\begin{definition}\label{fixedcategory}
Let $\mathcal{C}$ be an $\infty$-category with an action of a group $G$, given by a map $BG \to \mathcal{C}\hbox{at}_{\infty}$. The group cobar complex is a cosimplicial $\infty$-category $C^{\bullet}(G;\mathcal{C})$. We can consider the limit $$\mathcal{C}^{hG} = \hbox{Tot}(C^{\bullet}(G;\mathcal{C})).$$ An object $X$ of $\mathcal{C}$ will be called a {\em $G$-equivariant object of $\mathcal{C}$} if $X$ is an object of $\mathcal{C}^{hG}$.
\end{definition}

\begin{remark}
Informally, the objects of $\mathcal{C}^{hG}$ consist of the following data:
\begin{itemize}
\item An object $X \in \mathcal{C}$
\item An equivalence $\phi_{g,X}:g.X \to X$ for all $g\in G$
\item A $2$-simplex $\xymatrix{&X\\ g_1.X \ar[ur]^{\phi_{g_1,X}} &&g_2g_1.X \ar[ul]_{\phi_{g_1g_2,X}} \ar[ll]^{g_2,g_1.X}}$ for all $(g_1,g_2) \in G^2$
\item $\cdots$
\end{itemize}
\end{remark}

\begin{definition}\label{semilinear}
Let $B$ be an $E_{\infty}$-ring with an action of a group $G$. Then the $\infty$-category $B$-mod has an action of $G$. On objects it is given as follows. Let $N$ be a $B$-module with structure map $\alpha:B\wedge N \to N$. Then define $g.N$ to be the spectrum $N$ with a $B$-module structure by the map $\xymatrix{B\wedge N \ar[r]^{g\wedge 1} &B\wedge N \ar[r]^{\alpha} &N}$. We then refer to objects of the stable $\infty$-category $(B\hbox{-mod})^{hG}$ as {\em $G$-equivariant $B$-modules}.
\end{definition}

\begin{prop}\label{galoisdescentdata}Let $f:A\to B$ be  $G$-Galois extension of $E_{\infty}$-rings. Then there is an equivalence of stable $\infty$-categories $$Desc(f) \simeq (B\hbox{-mod})^{hG}.$$
\end{prop}
\begin{proof}
There is an equivalence $$\hbox{Tot}(QC(C^{\bullet}(G;B)) \simeq \hbox{Tot}(C^{\bullet}(G;B\hbox{-mod}))$$ since there is a codegreewise equivalance $(\Pi_{G^q}B)\hbox{-mod} \simeq \Pi_{G^q}(B\hbox{-mod})$. Also, by Lemma \ref{galoisamit} the map $$\hbox{Tot}(QC(h^{\bullet})): \hbox{Tot}(QC(C^{\bullet}(B/A)) \to \hbox{Tot}(QC(C^{\bullet}(G;B)))$$ induced by $h^{\bullet}$ is an equivalence of stable $\infty$-categories.
\end{proof}

We can state the Galois descent theorem for a faithful Galois extension of $E_{\infty}$-rings.

\begin{theorem}(Galois descent for $E_{\infty}$-rings)\label{GaloisDesc}
Let $f:A\to B$ be a faithful $G$-Galois extension of $E_{\infty}$-rings. Then the map  $$ A\hbox{-mod} \to (B\hbox{-mod})^{hG}$$ given by $N \mapsto  \hbox{Tot}(QC(h^{\bullet}))(\theta_f(N))$ is an equivalence of stable $\infty$-categories of the category of $A$-modules and the catgeory of $G$-equivariant $B$-modules.
\end{theorem}
\begin{proof}
Apply Thm.\ref{amit}, Prop. \ref{dualizable} and Prop.\ref{galoisdescentdata}.

\end{proof}

\section{Generalized Real Johnson-Wilson theories}

\subsection{$\mathbb{Z}/2$-equivariant spectra}\label{Z2}

Let $\alpha$ denote the one-dimesional sign representation of $\mathbb{Z}/2$ and $\mathcal{U} = \mathbb{R}^{\infty} \oplus \mathbb{R}^{\infty\alpha}$ be a complete $\mathbb{Z}/2$-universe. An indexing space is a finite-dimensional subrepresenatation of $\mathcal{U}$, therefore of the form $V=m+n\alpha$. If $V\subset W$, let $W-V$ be the orthogonal complement of $V$ in $W$.

A $\mathbb{Z}/2$-spectrum $X_{\mathbb{Z}/2}$ is a collection of pointed $\mathbb{Z}/2$-spaces $X_{\mathbb{Z}/2}(V)$ and a system of $\mathbb{Z}/2$-homeomorphisms $$X_{\mathbb{Z}/2}(V) \simeq \Omega^{W-V}X_{\mathbb{Z}/2}(V).$$ 

Since any indexing space is contained in $n(1+\alpha)$ for a large enough $n$, we have the following simple description of a $\mathbb{Z}/2$-spectrum.

\begin{lemma}
A $\mathbb{Z}/2$-spectrum $X_{\mathbb{Z}/2}$ is a collection of pointed $\mathbb{Z}/2$-equivariant spaces $X_n$ with equivariant structure maps, $$S^{1+\alpha}\wedge X_n \to X_{n+1}.$$
\end{lemma}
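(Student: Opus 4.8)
The plan is to deduce the statement from the standard cofinality property of indexing posets for equivariant spectra. Recall that a $\mathbb{Z}/2$-spectrum indexed on the complete universe $\mathcal{U}$ is, by definition, a collection of pointed $\mathbb{Z}/2$-spaces $X_V$, one for each finite-dimensional subrepresentation $V \subseteq \mathcal{U}$, together with $\mathbb{Z}/2$-equivariant structure maps $\sigma_{V,W}\colon S^{W-V}\wedge X_V \to X_W$ for each inclusion $V \subseteq W$ (with $W-V$ the orthogonal complement), normalized so that $\sigma_{V,V}$ is the identity and compatible with composition; for a genuine $\Omega$-spectrum one further requires the adjoints $X_V \to \Omega^{W-V} X_W$ to be homeomorphisms. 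Every indexing space has the form $V = m + n\alpha$, and I write $V_n = n(1+\alpha)$.

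First I would record that the chain $V_0 \subseteq V_1 \subseteq V_2 \subseteq \cdots$ is cofinal in the poset $\mathcal{A}(\mathcal{U})$ of all indexing spaces: given $V = m+n\alpha$ and $N = \max(m,n)$ one has $V \subseteq V_N$, and the $V_n$ form a totally ordered, hence filtered, subposet. Restricting a spectrum along this inclusion yields the sequence $X_n := X_{V_n}$ equipped with the structure maps $S^{V_{n+1}-V_n}\wedge X_{V_n} \to X_{V_{n+1}}$; since $V_{n+1}-V_n \cong 1+\alpha$ as a $\mathbb{Z}/2$-representation, these are precisely maps $S^{1+\alpha}\wedge X_n \to X_{n+1}$, which is the asserted data.

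Conversely, I would reconstruct a full spectrum from such a sequence: for an arbitrary indexing space $V$ choose $N$ with $V \subseteq V_N$ and set $X_V = \Omega^{V_N-V} X_N$ (in the prespectrum case one instead left Kan extends along the cofinal chain, applying $S^{V_N-V}\wedge(-)$ to $X_N$ and passing to the colimit over $N$), then check that this is independent of $N$ up to canonical homeomorphism because the transition maps $X_N \to \Omega^{1+\alpha} X_{N+1}$ are homeomorphisms, and that the resulting structure maps $\sigma_{V,W}$ for arbitrary $V \subseteq W$ are determined and mutually coherent. The two constructions are mutually inverse by cofinality of $\{V_n\}$, which guarantees that a compatible system over $\mathcal{A}(\mathcal{U})$ is uniquely determined by, and can be freely extended from, its restriction to the cofinal chain.

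The only point requiring real care — the ``hard part,'' such as it is — is the verification that restriction to the cofinal chain loses no information, i.e.\ that the coherence data (composition of structure maps over arbitrary composable inclusions $U \subseteq V \subseteq W$) is faithfully captured by, and can be rebuilt from, the single suspension coordinate $1+\alpha$. This is a routine but slightly fiddly diagram chase, and is exactly the content of the cofinality theorem for $G$-spectra (as in Lewis--May--Steinberger, or May's equivariant homotopy theory). Everything else is formal.
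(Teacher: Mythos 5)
Your argument is correct and is exactly the route the paper takes: the lemma is stated there as an immediate consequence of the observation that every indexing space $m+n\alpha$ is contained in $N(1+\alpha)$ for $N$ large, i.e.\ that the chain $\{n(1+\alpha)\}$ is cofinal in the poset of indexing spaces, and the paper offers no further proof. Your write-up simply makes explicit the restriction/Kan-extension equivalence that this cofinality provides, which matches the intended (one-line) justification.
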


\begin{definition}(\cite{GM})
The Borel, co-Borel, geometric and Tate spectra associated with $X_{\mathbb{Z}/2}$ are defined respectively as follows:
\begin{itemize}
\item[] $c({X_{\mathbb{Z}/2}}) = F(E{\mathbb{Z}/2}_+,{X_{\mathbb{Z}/2}})$ 
\item[] $f(X_{\mathbb{Z}/2}) = E{\mathbb{Z}/2}_+\wedge X_{\mathbb{Z}/2}$
\item[] $g({X_{\mathbb{Z}/2}}) = {X_{\mathbb{Z}/2}} \wedge \tilde{E}{\mathbb{Z}/2}$ 
\item[] $t({X_{\mathbb{Z}/2}}) = F(E{\mathbb{Z}/2}_+,{X_{\mathbb{Z}/2}}) \wedge \tilde{E}{\mathbb{Z}/2}$ 
\end{itemize}
where $\tilde{E}\mathbb{Z}/2$ is the unreduced suspension of $E\mathbb{Z}/2$.
\end{definition}

The Tate diagram is a commutative diagram of ${\mathbb{Z}/2}$-spectra.
\begin{equation}
\xymatrix{
f(X_{\mathbb{Z}/2}) \ar[r] \ar[dr] &X_{\mathbb{Z}/2} \ar[r] \ar[d] &g(X_{\mathbb{Z}/2}) \ar[d]\\
&c(X_{\mathbb{Z}/2}) \ar[r] &t(X_{\mathbb{Z}/2})\\
}
\end{equation}
The top and bottom rows are fibrations of $\mathbb{Z}/2$-spectra.

There are spectral sequences (Borel, co-Borel and Tate resp.)
\begin{equation}
\begin{split}
H^p({\mathbb{Z}/2};{X_{\mathbb{Z}/2}}^q) \Rightarrow c(X_{\mathbb{Z}/2})_{\star}\\
H_p({\mathbb{Z}/2};{X_{\mathbb{Z}/2}}^q) \Rightarrow f(X_{\mathbb{Z}/2})_{\star}\\
\widehat{H}^p({\mathbb{Z}/2};{X_{\mathbb{Z}/2}}^q) \Rightarrow t(X_{\mathbb{Z}/2})_{\star}\\
\end{split}
\end{equation}
where $q \in RO(\mathbb{Z}/2)$ and $p \geq 0, p\geq 0$ and $p \in \mathbb{Z}$ respectively.

The {\it homotopy fixed points} $X^{h{\mathbb{Z}/2}}$ of $X_{\mathbb{Z}/2}$ is the ordinary fixed points of the Borel spectrum. The standard inclusion map ${\mathbb{Z}/2}_+ \subset E{\mathbb{Z}/2}_+$ induces a map of non-equivariant spectra. 
$$X^{h\mathbb{Z}/2} \to X = F({\mathbb{Z}/2}_+,X_{\mathbb{Z}/2})^{\mathbb{Z}/2}$$

\subsection{Real-oriented cohomology theories}\label{real}

In this section we recall the construction of real-oriented spectra from \cite{HK}. Let $MU(n)$ denote the Thom space of the universal bundle $\gamma_n$ over $BU(n)$. Complex conjugation induces an action of $\mathbb{Z}/2$ on $MU(n)$. The canonical Real bundle $\gamma_n$ of dimension $n$ over $BU(n)$ gives equivariant maps between Thom spaces, $\Sigma^{1+\alpha}BU(n)^{\gamma_n} \rightarrow BU(n+1)^{\gamma_{n+1}}$. The resulting genuine $\mathbb{Z}/2$-spectrum is denoted $MU_{\mathbb{R}}$. The spectrum $MU_{\mathbb{R}}$ is an $E_{\infty}$ ring spctrum. The underlying non-equivariant spectrum of ${MU_{\mathbb{R}}}$ is $MU$.

\begin{definition}
Let $B\mathbb{S}^1$ be the classifying space of $\mathbb{S}^1$ considered as $\mathbb{Z}/2$ equivariant space via the inclusion $\mathbb{S}^1 \subset \mathbb{C}^*$. We have $\Omega B \mathbb{S}^1 \simeq \mathbb{S}^1$ in the category of based $\mathbb{Z}/2$-spaces. Therefore by adjunction we have a canonical equivariant based map $\eta: S^{1+\alpha} \rightarrow B\mathbb{S}^1$. Let $X_{\mathbb{Z}/2}$ be a homotopy commutative and associative ${\mathbb{Z}/2}$-ring spectrum. A {\it Real orientation} of $X_{\mathbb{Z}/2}$ is a cohomology class $u:B\mathbb{S}^1 \rightarrow \Sigma^{1+\alpha}X_{\mathbb{Z}/2}$ such that $\eta^*u = 1$ in $\pi_0X_{\mathbb{Z}/2}$.
\end{definition}

The spectrum $MU_{\mathbb{R}}$ is real-oriented, hence it supports a formal group law. The forgetful map ${MU_{\mathbb{R}}}_{\star} \to MU_{\star}$ is split by the map of rings $MU_* \to {MU_{\mathbb{R}}}_{\star}$ classifying this formal group law, where the image of $x_i \in MU_{2i}$ is in degree $i(1+\alpha)$. Here $\star$ denotes the bigraded coefficient ring of any $\mathbb{Z}/2$-spectrum.

\subsection{$BP\langle n, \bold{u} \rangle_{\mathbb{R}}$ and $E(n;\bold{u})_{\mathbb{R}}$}\label{generalized}

Working $2$-locally there is a Real analogue of the Quillen idempotent which produces a $\mathbb{Z}/2$-equivariant spectrum $BP_{\mathbb{R}}$ such that $\left(MU_{\mathbb{R}}\right)_{(2)}$ splits as a wedge of suspensions of $BP_{\mathbb{R}}$ (\cite[Thm.2.33]{HK}).

Using the map $MU_* \to {MU_{\mathbb{R}}}_{\star}$ we may identify classes $v_n \in \pi_{(2^n-1)(1+\alpha)}MU_{\mathbb{R}}$. Using these elements some constructions of complex-oriented spectra can be mimicked to give real versions. Consider the $MU$-module spectra $E(n)$ and $BP\langle n \rangle$ with coefficient rings:
$$BP\langle n \rangle_* = \mathbb{Z}_{(2)}[v_1,\ldots,v_{n}]$$
$$E(n)_*  = \mathbb{Z}_{(2)}[v_1,\ldots,v_{n-1},v_n^{\pm 1}]$$ 

We can mod out by the lifts $v_{n+1}, v_{n+2}, \cdots \in \pi_{\star}BP_{\mathbb{R}}$ to construct Real truncated $BP$ theory $BP\langle n \rangle _{\mathbb{R}}$. We can also invert $v_n$ to construct Real Johnson Wilson theory $E(n)_{\mathbb{R}}$ (see \cite[Sec.3]{HK}).

\begin{definition}
Let $\bold{u}=u_0,u_1,\ldots,u_n,\ldots$ be a regular sequence in $BP_*$ such that $|u_i| = 2(2^i-1)$ and $(2,u_1,u_2,\ldots,u_{n-1})=I_n$. There are commutative ring spectra (see \cite{Baker}) $BP\langle n ; \bold{u}\rangle$ and $E(n;\bold{u})$ such that $$BP\langle n; \bold{u} \rangle_* = BP_*/(u_i: i \geq n+1)$$ and $$E(n;\bold{u})_* = BP_*/(u_i: i\geq n+1)[u_n^{-1}].$$ We refer to $E(n;\bold{u})$ as a {\em genralized Johnson-Wilson spectrum}.
\end{definition}

\begin{remark}
 If we set $\bold{u}=\bold{v}$, the Hazewinkel generators of $BP_*$, then we recover the standard Johnson-Wilson spectra $BP\langle n \rangle$ and $E(n)$. The rings $\pi_*E(n;\bold{u})$ are isomorphic for different choices of $\bold{u}$, but support different formal group laws and therefore are non-equivalent as complex-oriented spectra.
\end{remark}

\begin{definition}
We can identify classes $u_i \in \pi_{(2^i-1)(1+\alpha)}BP_{\mathbb{R}}$ which are lifts of $u_i \in BP_{(2^i-1)2}$. Going modulo the appropriate classes in $\pi_{\star}BP_{\mathbb{R}}$ we can define real-oriented spectra $BP\langle n; \bold{u}\rangle_{\mathbb{R}}$ and $E(n;\bold{u})_{\mathbb{R}}$. We refer to $E(n;\bold{u})_{\mathbb{R}}$ as a {\em generalized Real Johnson-Wilson sprectrum}.
\end{definition}

\subsection{Fibrations related to $E(n;\bold{u})_{\mathbb{R}}$}\label{fib}
 
\begin{theorem}\label{comp}Let $X$ be a generalized Johnson-Wilson spectrum $E(n;\bold{u})$ and $X_{\mathbb{R}}=E(n;\bold{u})_{\mathbb{R}}$ the associated real-oriented spectrum.

The following are true.
\begin{itemize}
\item[(i)] There is an invertible element $y(n) \in {c(X_{\mathbb{R}})}_{\star}$ in degree $\lambda(n)+\alpha$, where $\lambda(n) = 2^{2n+1}-2^{n+2}+1$.
\item[(ii)] The Euler class $a \in {c(X_{\mathbb{R}})}_{\star}$ in degree $\alpha$, coming from $a:S^0\to S^{\alpha}$, is nilpotent.

\item[(iii)] The Tate spectrum $t(X_{\mathbb{R}})$ is trivial.
\end{itemize}
\end{theorem}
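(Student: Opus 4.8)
The three statements should all be read off from a computation of the Borel cohomology $c(X_{\mathbb{R}})_{\star}$, which is what the appendix is advertised to carry out; so the plan is to organize the argument around the Borel spectral sequence
\[
H^p(\mathbb{Z}/2; \pi^q_{\star} X_{\mathbb{R}}) \Rightarrow c(X_{\mathbb{R}})_{\star},
\]
where the input $\pi^q_{\star} X_{\mathbb{R}}$ is determined by the splitting $\pi_{\star}MU_{\mathbb{R}} \to \pi_{\star}X_{\mathbb{R}}$ together with the relations defining $E(n;\mathbf{u})_{\mathbb{R}}$: the underlying coefficients are $BP_*/(u_i : i \geq n+1)[u_n^{-1}]$ with the classes $v_i$ (equivalently $u_i$) lifted to bidegree $(2^i-1)(1+\alpha)$. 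The $\mathbb{Z}/2$-action on the underlying homotopy is by complex conjugation, which on the polynomial-type generators $u_i$ in degree $(2^i-1)(1+\alpha)$ acts by a sign governed by the $\alpha$-part of the degree. First I would write the $E_2$-term explicitly as $\mathbb{Z}/2[\,]$-cohomology of this module, identify the permanent cycles, and locate the classes $a$ (the Euler class, image of the generator of $H^1$) and $y(n)$.

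For (i), the element $y(n)$ in degree $\lambda(n)+\alpha$ with $\lambda(n) = 2^{2n+1}-2^{n+2}+1$ should be exhibited as a specific monomial in the $u_i$'s (and possibly $u_n^{-1}$) multiplied by $a$; the exponent $\lambda(n)$ is exactly what is needed to make the total degree land in the subring of $RO(\mathbb{Z}/2)$-degrees where the class becomes a unit — note $\lambda(n) = (2^{n+1}-1)^2 - (2^{n+1}-1) + 1$, so it is naturally built from $u_n$-type classes of degree involving $2^{n+1}-1$. I would check directly that the candidate monomial is a permanent cycle in the Borel spectral sequence (there is nothing above it in filtration in the relevant total degree once $u_n$ is inverted) and that it is invertible in $c(X_{\mathbb{R}})_{\star}$ by writing down its inverse as the corresponding monomial with $u_n^{-1}$. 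The periodicity this produces is the Real analogue of the $v_n$-periodicity of $E(n)$, shifted into the $\alpha$-direction.

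Statement (ii) is then almost formal given (i): the Euler class $a \in c(X_{\mathbb{R}})_{\alpha}$ becomes \emph{invertible} after multiplying by $y(n)$, but $a$ itself cannot be a unit because the cofiber sequence $S^0 \xrightarrow{a} S^{\alpha} \to \mathbb{Z}/2_+$ smashed with $X_{\mathbb{R}}$ shows multiplication by $a$ is not surjective onto all of $c(X_{\mathbb{R}})_{\star}$ in low filtration; more precisely, $y(n)$ being a unit forces $a^N$ to be divisible by a unit for some power, and comparing filtrations (a permanent cycle supported in $H^1$ can only have finitely many nonzero powers once the $E_2$-term is a finitely generated module over the polynomial part) shows $a$ is nilpotent. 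The cleanest route is: since $y(n) = a \cdot (\text{unit-after-inverting-}u_n)$ up to a permanent-cycle factor, and since the analogous real Johnson--Wilson computation of Kitchloo--Wilson gives $a^{2^{n+1}-1}$-nilpotence, I expect $a^{2^{n+1}-1} = 0$. For (iii), triviality of the Tate spectrum $t(X_{\mathbb{R}}) = c(X_{\mathbb{R}}) \wedge \widetilde{E\mathbb{Z}/2}$ follows because smashing with $\widetilde{E\mathbb{Z}/2}$ inverts $a$, and on a ring spectrum in which $a$ is nilpotent this localization is contractible; equivalently, the Tate spectral sequence $\widehat{H}^p(\mathbb{Z}/2; \pi^q_{\star}X_{\mathbb{R}})$ has $a$ inverted, but $a$ nilpotent in the abutment forces the whole $\widehat{H}^*$-module to vanish.

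The main obstacle is the bookkeeping in step one: correctly identifying the $\mathbb{Z}/2$-action on $\pi_{\star}X_{\mathbb{R}}$ in all $RO(\mathbb{Z}/2)$-degrees (not just integer degrees), computing the group cohomology of this $RO(\mathbb{Z}/2)$-graded module, and pinning down the differentials in the Borel spectral sequence precisely enough to be sure the monomial representing $y(n)$ survives and that no unexpected differential truncates the powers of $a$ prematurely. This is why the detailed computation is deferred to the appendix; once it is in hand, (i)–(iii) are short consequences as sketched above.
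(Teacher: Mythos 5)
Your overall strategy---read (i) and (ii) off the Borel spectral sequence computation in the appendix, then get (iii) by observing that $t(X_{\mathbb{R}})$ is the $a$-localization of $c(X_{\mathbb{R}})$ and that a nilpotent element localizes to zero---is exactly the paper's, and your treatment of (iii) is correct as stated. The genuine gap is in your identification of $y(n)$. You propose that $y(n)$ is a monomial in the $u_i$'s ``multiplied by $a$,'' and later assert $y(n)=a\cdot(\hbox{unit})$ up to a permanent-cycle factor. This cannot be right: if $a$ divided an invertible element, $a$ itself would be invertible, contradicting the nilpotence you establish in (ii) (and forcing $c(X_{\mathbb{R}})\simeq *$). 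The $\alpha$ in the degree $\lambda(n)+\alpha$ does not come from the Euler class $a$ but from the class $\sigma\in\pi_{1-\alpha}F(\mathbb{Z}/2_+,X_{\mathbb{R}})$ arising from the homeomorphism $S^1\wedge\mathbb{Z}/2_+\simeq S^{\alpha}\wedge\mathbb{Z}/2_+$; in $c(E(n;\mathbf{u})_{\mathbb{R}})_{\star}$ the powers $\sigma^{\pm 2^{n+1}}$ survive and are invertible, and the paper takes $y(n)=u_n^{2^n-1}\sigma^{-2^{n+1}(2^{n-1}-1)}$, a product of two units, whose bidegree one checks is $\lambda(n)+\alpha$. (Your numerology $\lambda(n)=(2^{n+1}-1)^2-(2^{n+1}-1)+1$ is also off; e.g.\ $\lambda(1)=1$, not $7$.)

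Relatedly, your argument for (ii) leans on the false factorization of $y(n)$ and on an appeal to Kitchloo--Wilson, rather than on the actual mechanism: the Borel spectral sequence differential $d_{2^{k+1}-1}(\sigma^{-2^k})=(\hbox{unit})\,u_k a^{2^{k+1}-1}$ imposes the relation $u_n\sigma^{l2^{n+1}}a^{2^{n+1}-1}=0$ on $E_\infty$, and since $u_n$ is invertible in $c(E(n;\mathbf{u})_{\mathbb{R}})_{\star}$ this yields $a^{2^{n+1}-1}=0$ directly. So the conclusion of (ii) is correct, but to make the proof work you need to replace the ``$y(n)=a\cdot(\hbox{unit})$'' step with the identification of $\sigma$ and its role both in building $y(n)$ and in supporting the differential that truncates the powers of $a$.
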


\begin{proof} Using Cor.\ref{BERnu} define the invertible element $y(n) = u_n^{2^n-1}\sigma^{-2^{n+1}(2^{n-1}-1)}\in c(X_{\mathbb{R}})_{\star}$. Since $u_n$ is invertible in $c(X_{\mathbb{R}})_{\star}$ it follows from the differential of equation (\ref{diffBSS}) that $a^{2^{n+1}-1}=0$ in $c(X_{\mathbb{R}})_{\star}$.

Recall that $t(X_{\mathbb{Z}/2})$ is the localization of $c(X_{\mathbb{Z}/2})$ away from $a$ (\cite[16.3]{GM}). In the case of $X_{\mathbb{R}}$ there is a commutative square of $\mathbb{Z}/2$-equivariant ring spectra:

$$\xymatrix{
X_{\mathbb{R}} \ar[d] \ar[r] &g(X_{\mathbb{R}}) \ar[d] \ar@{}[r] &=X_{\mathbb{R}}[a^{-1}] \\
c(X_{\mathbb{R}}) \ar[r] &t(X_{\mathbb{R}}) \ar@{}[r] &=c(X_{\mathbb{R}})[a^{-1}]\\
}$$

The element $a:S^0 \subset S^{\alpha}$ acts
\begin{itemize}
\item[(a)] nilpotently on $c(X_{\mathbb{R}})_{\star}$.
\item[(b)] invertibly on $g(X_{\mathbb{R}})_{\star}$ and $t(X_{\mathbb{R}})_{\star}$.
\end{itemize}
 
The Tate spectrum $t(X_{\mathbb{R}})$ is the localization of $c(X_{\mathbb{R}})$ away from $a$, so on $t(X_{\mathbb{R}})_{\star}$, $a$ acts invertibly as well as nilpotently. Therefore $t(X_{\mathbb{R}})$ is equivariantly contractible.
\end{proof}

\begin{theorem}\label{KWfib}
There is a fibration of $XR$-algebras,
\begin{equation}\label{cof}
\xymatrix{
\Sigma^{\lambda(n)}XR \ar[r]^{x(n)} &XR \ar[r] &X\\
}
\end{equation}
where $XR := X_{\mathbb{R}}^{h\mathbb{Z}/2}$ The element $x(n)$ has degree $\lambda(n) = 2^{2n+1} -2^{n+2}+1$ and is nilpotent with $x(n)^{2^{n+1}-1}=0$. 
\end{theorem}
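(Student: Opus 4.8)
The plan is to identify the fibre of the homotopy fixed point inclusion $XR\to X$ directly. Write $XR=F(E\mathbb{Z}/2_{+},X_{\mathbb{R}})^{\mathbb{Z}/2}$ and $X=F(\mathbb{Z}/2_{+},X_{\mathbb{R}})^{\mathbb{Z}/2}$, the map being induced by the inclusion $\mathbb{Z}/2_{+}\subset E\mathbb{Z}/2_{+}$ of the $0$-skeleton. Applying the exact contravariant functor $F(-,X_{\mathbb{R}})^{\mathbb{Z}/2}$ to the cofibre sequence of pointed $\mathbb{Z}/2$-spaces $\mathbb{Z}/2_{+}\to E\mathbb{Z}/2_{+}\to Q$, with $Q:=E\mathbb{Z}/2_{+}/\mathbb{Z}/2_{+}$, gives
$$\operatorname{fib}(XR\to X)\;\simeq\;F(Q,X_{\mathbb{R}})^{\mathbb{Z}/2},$$
so the whole task is to compute this mapping spectrum.

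The first ingredient is the octahedral identity for $\mathbb{Z}/2_{+}=S(\alpha)_{+}\hookrightarrow S(\infty\alpha)_{+}=E\mathbb{Z}/2_{+}\xrightarrow{\ \text{collapse}\ }S^{0}$: the three cofibres fit into a cofibre sequence $Q\to S^{\alpha}\to\widetilde{E}\mathbb{Z}/2$, and the composite $E\mathbb{Z}/2_{+}\to Q\to S^{\alpha}$ is the collapse followed by the Euler class $a\colon S^{0}\to S^{\alpha}$. The second ingredient is that $c(X_{\mathbb{R}})=F(E\mathbb{Z}/2_{+},X_{\mathbb{R}})$ is Borel complete, so $F(\widetilde{E}\mathbb{Z}/2,c(X_{\mathbb{R}}))\simeq\ast$ (apply $F(-,c(X_{\mathbb{R}}))$ to $E\mathbb{Z}/2_{+}\to S^{0}\to\widetilde{E}\mathbb{Z}/2$), together with $F(Q,X_{\mathbb{R}})\simeq F(Q,c(X_{\mathbb{R}}))$, which holds because $Q$ is a free pointed $\mathbb{Z}/2$-CW complex. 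Feeding $Q\to S^{\alpha}\to\widetilde{E}\mathbb{Z}/2$ into $F(-,c(X_{\mathbb{R}}))^{\mathbb{Z}/2}$ and using the vanishing above therefore yields
$$\operatorname{fib}(XR\to X)\;\simeq\;F(S^{\alpha},c(X_{\mathbb{R}}))^{\mathbb{Z}/2}\;=\;\bigl(\Sigma^{-\alpha}c(X_{\mathbb{R}})\bigr)^{\mathbb{Z}/2}.$$
Now I would invoke Theorem~\ref{comp}(i): multiplication by the unit $y(n)\in c(X_{\mathbb{R}})_{\lambda(n)+\alpha}$ is an equivalence $\Sigma^{\lambda(n)+\alpha}c(X_{\mathbb{R}})\xrightarrow{\ \sim\ }c(X_{\mathbb{R}})$, hence $\Sigma^{-\alpha}c(X_{\mathbb{R}})\simeq\Sigma^{\lambda(n)}c(X_{\mathbb{R}})$, and passing to $\mathbb{Z}/2$-fixed points (which commute with integral suspension) identifies $\operatorname{fib}(XR\to X)\simeq\Sigma^{\lambda(n)}XR$. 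Tracing the octahedral identity back through these equivalences, the fibre inclusion $\Sigma^{\lambda(n)}XR\to XR$ becomes multiplication by the class $x(n):=y(n)\,a$ of $\pi_{\lambda(n)}(XR)=c(X_{\mathbb{R}})_{\lambda(n)}$. This produces the fibration $\Sigma^{\lambda(n)}XR\xrightarrow{\ x(n)\ }XR\to X$.

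It remains to address the module structure and the nilpotence. For the first, the standing hypothesis that $X=E(n;\mathbf{u})$ carries a commutative $S$-algebra structure makes $X_{\mathbb{R}}$ a commutative $\mathbb{Z}/2$-equivariant $S$-algebra (cf.\ section~\ref{generalized}); since $F(E\mathbb{Z}/2_{+},-)^{\mathbb{Z}/2}$ is lax symmetric monoidal, $XR$ is a commutative $S$-algebra, $X$ is its underlying commutative $S$-algebra, and $XR\to X$ is a ring map. All the spectra and maps above are then $XR$-modules and $XR$-linear, and $X$ is an $XR$-algebra via $XR\to X$; this is the content of ``fibration of $XR$-algebras''. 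For the nilpotence, the graded-commutative ring $c(X_{\mathbb{R}})_{\star}$ satisfies $x(n)^{2^{n+1}-1}=y(n)^{2^{n+1}-1}a^{2^{n+1}-1}=0$ by Theorem~\ref{comp}(ii); as this product sits in integral degree $(2^{n+1}-1)\lambda(n)$, it already vanishes in $\pi_{\star}(XR)$.

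The main obstacle is the octahedral bookkeeping in the second paragraph: identifying $Q$ correctly and, above all, verifying that the composite $E\mathbb{Z}/2_{+}\to Q\to S^{\alpha}$ is the collapse followed by $a$, since this is precisely what pins the attaching element down to $y(n)\,a$ rather than some other class of integral degree $\lambda(n)$ in $c(X_{\mathbb{R}})_{\star}$; one must also check that the replacement of $\Sigma^{-\alpha}$ by $\Sigma^{\lambda(n)}$ via $y(n)$ and the passage to fixed points are multiplicative, so that the output is a genuine cofibre sequence of $XR$-modules whose last map is an $XR$-algebra map. A more computational alternative, in the spirit of Kitchloo--Wilson's treatment of $ER(n)\to E(n)$, is to read the fibration and the class $x(n)$ off the $a$-Bockstein (Borel) spectral sequence for $c(X_{\mathbb{R}})$ computed in the appendix, where $x(n)$ appears as the first surviving power of $a$ times the periodicity class.
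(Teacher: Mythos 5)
Your argument is correct and is essentially the paper's own proof: both identify the fibre of $XR \to X$ with the $\mathbb{Z}/2$-fixed points of $\Sigma^{-\alpha}c(X_{\mathbb{R}})$ via the Euler-class cofibration coming from $\mathbb{Z}/2_+ \to S^0 \xrightarrow{a} S^{\alpha}$, then use the invertible class $y(n)$ of Theorem \ref{comp}(i) to rewrite $\Sigma^{-\alpha}c(X_{\mathbb{R}})$ as $\Sigma^{\lambda(n)}c(X_{\mathbb{R}})$, so that $x(n)=y(n)\,a$ and the nilpotence follows from $a^{2^{n+1}-1}=0$. The only packaging difference is that the paper first splits $X_{\mathbb{R}}\simeq c(X_{\mathbb{R}})\vee g(X_{\mathbb{R}})$ using the Tate vanishing of Theorem \ref{comp}(iii) in order to identify the cofibre term with $F(\mathbb{Z}/2_+,X_{\mathbb{R}})$, whereas you obtain the same identification from the freeness of $Q$ together with Borel completeness of $c(X_{\mathbb{R}})$, bypassing part (iii) altogether.
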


\begin{example}-
\begin{itemize}
\item[(i)]
When $X=KU$, this is the fibration $$\xymatrix{\Sigma^1KO \ar[r]^{\eta} &KO \ar[r]^c &KU}$$ of $KO$-module spectra. The map $c$ is complexification and the fiber can be identified with $\Sigma KO$ using the equivalence $KU \simeq KO\wedge C_{\eta}$ of Reg Wood. Here $\eta$ is the stable Hopf map $\eta:S^1\rightarrow S^0$ and $\eta^3=0$.

\item[(ii)]
The spectrum of $TMF_1(3)$ of topological modular forms with $\Gamma_1(3)$ level structures is an example of $X$ when $n=2$. There is a $\mathbb{Z}/2= \Gamma_0(3)/\Gamma_1(3)$ action on $TMF_1(3)$ coming from the level $3$ structures. In a recent preprint (\cite{HM}) Hill and Meier has shown that $TMF_1(3)$ with this $\mathbb{Z}/2$-action is real-oriented, so that the $\mathbb{Z}/2$-homotopy fixed points of $TMF_1(3)_{\mathbb{R}}$ is $TMF_0(3)$. The fibration of Theorem \ref{KWfib} then is the Maholwald-Rezk fibration (\cite[Remark 4.2]{MR}),
$$\xymatrix{\Sigma^{17}TMF_0(3) \ar[r]^{x} &TMF_0(3) \ar[r] &TMF_1(3)}$$ of $TMF_0(3)$-module spectra. The element $x \in \Sigma^{17}TMF_0(3)$ is nilpotent with $x^7=0$ (see \cite[Prop 4.1]{MR}).

\item[(iii)] More generally, with $X=E(n)$ the $n$-th Johnson-Wilson spectrum, Kitchloo and Wilson (\cite{KW1}) have produced fibrations $$\Sigma^{\lambda(n)}ER(n) \to ER(n) \to E(n).$$

\end{itemize}
\end{example}

\begin{proof}
From Thm.\ref{comp} we have the following equivalence as part of the Tate diagram.

$$\xymatrix{
f(X_{\mathbb{R}}) \ar[r] \ar[dr]^{\simeq} &X_{\mathbb{R}} \ar[d]  \ar[r] &g(X_{\mathbb{R}})\\
&c(X_{\mathbb{R}})\\
}$$
This implies a splitting of $\mathbb{Z}/2$- ring spectra 
$$ X_{\mathbb{R}} \simeq c(X_{\mathbb{R}}) \vee g(X_{\mathbb{R}}).$$

There is a fibration
$\xymatrix{
\mathbb{Z}/2_+ \ar[r] &S^0 \ar[r]^a &S^{\alpha}\\
}$
inducing a fibration
$$\xymatrix{
F(S^{\alpha}, g(X_{\mathbb{R}})) \ar[d]^{\simeq} \ar[r]^{a^*} &F(S^0,g(X_{\mathbb{R}})) \ar[r] \ar[d]^{\simeq} &F(\mathbb{Z}/2_+, g(X_{\mathbb{R}}))\\
\Sigma^{-\alpha}g(X_{\mathbb{R}}) \ar[r]^a_{\simeq} &g(X_{\mathbb{R}})\\
}$$
This equivalence induced by $a$ implies that $F(\mathbb{Z}/2_+,g(X_{\mathbb{R}}))$ is trivial. We have the analogous fibration:

$$\xymatrix{
\Sigma^{-\alpha}c(X_{\mathbb{R}}) \ar[r]^a &c(X_{\mathbb{R}}) \ar[r] &F(\mathbb{Z}/2_+,c(X_{\mathbb{R}})) \ar[d]^=\\
\Sigma^{\lambda(n)}c(X_{\mathbb{R}}) \ar[u]^{y(n)}_{\simeq} \ar[ur]_{x(n)} &&F(\mathbb{Z}/2_+,X_{\mathbb{R}})\\
}$$

The desired fibration is obtained by taking the ordinary fixed points of the bottom fibration. 

\end{proof}

\begin{theorem}\label{faith} Let $X$ be a generalized Johnson-Wilson-spectrum having a $E_{\infty}$-ring structure. Then the extension $XR \rightarrow X$ is a faithful $\mathbb{Z}/2$-Galois extension.
\end{theorem}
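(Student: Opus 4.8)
The plan is to verify the two defining conditions of a global $\mathbb{Z}/2$-Galois extension for the map $i\colon XR\to X$, namely that $i$ exhibits $X$ as $XR^{h\mathbb{Z}/2}$ up to the self-identification $XR^{h\mathbb{Z}/2}=(X_{\mathbb{R}}^{h\mathbb{Z}/2})^{h\mathbb{Z}/2}$ — more precisely that the canonical map $i\colon XR\to X^{h\mathbb{Z}/2}$ (for the residual $\mathbb{Z}/2$-action on $X=F(\mathbb{Z}/2_+,X_{\mathbb{R}})^{\mathbb{Z}/2}$) is an equivalence, and that $h\colon X\wedge_{XR}X\to F(\mathbb{Z}/2_+,X)$ is an equivalence — together with faithfulness. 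The key structural input is already in hand: by Theorem~\ref{KWfib} we have a fibration $\Sigma^{\lambda(n)}XR\xrightarrow{x(n)}XR\to X$ of $XR$-algebras with $x(n)$ nilpotent, and from the proof of Theorem~\ref{KWfib} the splitting $X_{\mathbb{R}}\simeq c(X_{\mathbb{R}})\vee g(X_{\mathbb{R}})$ together with the identifications $c(X_{\mathbb{R}})^{\mathbb{Z}/2}=XR$ and $F(\mathbb{Z}/2_+,X_{\mathbb{R}})^{\mathbb{Z}/2}=X$.

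\emph{First} I would establish the Galois conditions. For condition (ii): the Tate diagram gives a fibration of $\mathbb{Z}/2$-spectra $f(X_{\mathbb{R}})\to X_{\mathbb{R}}\to g(X_{\mathbb{R}})$, and applying $F(\mathbb{Z}/2_+,-)^{\mathbb{Z}/2}$ (i.e.\ passing to underlying spectra) to $X_{\mathbb{R}}\simeq c(X_{\mathbb{R}})\vee g(X_{\mathbb{R}})$ realizes $X$ as a module where the $a$-Bockstein structure forces $X\wedge_{XR}X\simeq F(\mathbb{Z}/2_+,X)$; concretely one checks that smashing the $XR$-algebra fibration (\ref{cof}) with $X$ over $XR$ kills $x(n)$ (since $x(n)$ maps to the nilpotent element that becomes invertible, hence zero, after inverting $a$, which is what passing to $g$ does), giving $X\wedge_{XR}X\simeq X\times X\simeq F(\mathbb{Z}/2_+,X)$. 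For condition (i), that $XR\to X^{h\mathbb{Z}/2}$ is an equivalence: the homotopy fixed point spectral sequence for the residual action, or equivalently the fibration (\ref{cof}) together with the identification of the $\mathbb{Z}/2$-action on $X$ with the one swapping the two $g(X_{\mathbb{R}})$-summands-and-twisting, computes $X^{h\mathbb{Z}/2}\simeq XR$. Both computations ultimately rest on the homotopy of $X_{\mathbb{R}}$ via the Borel spectral sequence, which is carried out in the appendix and used in Theorem~\ref{comp}.

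\emph{Next}, for faithfulness: I would use that $X$ is dualizable over $XR$ — in fact the fibration (\ref{cof}) exhibits $X$ as a finite cell $XR$-module (two cells, in degrees $0$ and $\lambda(n)+1$) since $x(n)$ is nilpotent and we may build $X=XR/x(n)$, and more generally $X$ is perfect over $XR$. Then for an $XR$-module $M$ with $X\wedge_{XR}M\simeq *$, the cofiber sequence $\Sigma^{\lambda(n)}M\xrightarrow{x(n)} M\to X\wedge_{XR}M$ shows $x(n)\colon\Sigma^{\lambda(n)}M\to M$ is an equivalence; but $x(n)$ is nilpotent ($x(n)^{2^{n+1}-1}=0$), so iterating gives $M\simeq *$. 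This is the standard argument that $XR\to XR/x(n)$ is faithful when $x(n)$ is nilpotent.

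\emph{The main obstacle} I expect is condition (i) — showing $i\colon XR\to X^{h\mathbb{Z}/2}$ is an equivalence — because it requires knowing the residual $\mathbb{Z}/2$-action on $X$ precisely enough to compute its homotopy fixed points, and in particular identifying the homotopy fixed point spectral sequence's $E_2$-page and its differentials with the $a$-Bockstein pattern governing (\ref{cof}); the splitting $X_{\mathbb{R}}\simeq c(X_{\mathbb{R}})\vee g(X_{\mathbb{R}})$ helps, but one must still track how $g(X_{\mathbb{R}})$ contributes and verify its homotopy-fixed-point contribution vanishes (which is where triviality of the Tate spectrum, Theorem~\ref{comp}(iii), re-enters). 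Condition (ii) and faithfulness, by contrast, are relatively formal consequences of (\ref{cof}) plus nilpotence of $x(n)$. The verification throughout leans on the coefficient computations of the appendix, so I would state the homotopy of $X_{\mathbb{R}}$, $c(X_{\mathbb{R}})$, and $g(X_{\mathbb{R}})$ explicitly first and then deduce the three claims as corollaries.
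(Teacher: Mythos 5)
Your proposal takes the same overall route as the paper --- verify Rognes' two conditions using the fibration of Theorem \ref{KWfib} and deduce faithfulness from the nilpotence of $x(n)$ --- and your faithfulness argument coincides exactly with the paper's. However, two points need attention. You locate the main difficulty in condition (i), but that condition is essentially definitional here: $XR=X_{\mathbb{R}}^{h\mathbb{Z}/2}$ and $X=F(\mathbb{Z}/2_+,X_{\mathbb{R}})^{\mathbb{Z}/2}$, so $X^{h\mathbb{Z}/2}\simeq X_{\mathbb{R}}^{h\mathbb{Z}/2}=XR$ by the standard untwisting of the coinduced spectrum; the paper disposes of (i) in one line, and no homotopy fixed point spectral sequence is run. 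The real work is in condition (ii), and there your argument has a genuine gap: exhibiting an abstract splitting $X\wedge_{XR}X\simeq X\vee\Sigma^{\lambda(n)+1}X\simeq X\times X$ does not establish that the \emph{canonical} map $h\colon X\wedge_{XR}X\to F(\mathbb{Z}/2_+,X)$ is an equivalence, which is what the Galois condition demands. The paper closes this by mapping the cofiber sequence $XR\to X\to\Sigma^{\lambda(n)+1}XR$ to the trivial extension $X\xrightarrow{\Delta}F(\mathbb{Z}/2_+,X)\xrightarrow{\delta}X$ via the homotopy fixed point inclusion, base changing along $XR\to X$, and observing that the outer vertical maps are equivalences (the right-hand one by the periodicity identity $2(2^n-1)(2^n-1)=\lambda(n)+1$), whence $h$ is an equivalence between the middle terms. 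Separately, your stated mechanism for why $x(n)$ dies after base change to $X$ --- that passing to $X$ amounts to inverting $a$ --- is incorrect: inverting $a$ produces $g(X_{\mathbb{R}})=X_{\mathbb{R}}[a^{-1}]$, the geometric fixed points direction, not the underlying spectrum; the correct (and simpler) reason is that $x(n)$ is the first map in the cofiber sequence defining $X$, so its image in $\pi_*X$ vanishes.
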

\begin{proof}
The proof proceeds along the lines of \cite[Prop. 5.3.1]{Rognes}. The main ingredient is the fibration of Prop. \ref{KWfib}. By definition, $XR = c(X_{\mathbb{R}})^{\mathbb{Z}/2}$ and the group $\mathbb{Z}/2$ acts through $XR$-algebra maps. Therefore in order to show $XR\to X$ is Galois it only remains to show that the map $$h:X\wedge_{XR}X \rightarrow \Pi_{\mathbb{Z}/2}X$$ is an equivalence. Consider the part of the cofibration (\ref{cof}) induced by fibration 
\begin{equation}\label{cof2}
S^{\alpha -1} \rightarrow \mathbb{Z}/2_+ \rightarrow S^0.
\end{equation}

$$\xymatrix{
XR\ar@{=}[d] \ar[r] &X \ar@{=}[d] \ar[r] &\Sigma^{\lambda(n)+1}XR \ar@{=}[d] \\
c(X_{\mathbb{R}})^{\mathbb{Z}/2} \ar[r] &c(F(\mathbb{Z}/2_+,X_{\mathbb{R}}))^{\mathbb{Z}/2} \ar[r] &c(\Sigma^{\alpha-1}X_{\mathbb{R}})^{\mathbb{Z}/2}\\
}$$

The homotopy fixed points inclusion $$c({X_G})^G \rightarrow X$$ then produces a commutative diagram.

$$\xymatrix{
XR\ar[d] \ar[r] &X \ar[d] \ar[r] &\Sigma^{\lambda(n)+1}XR \ar[d] \\
X \ar[r]_{\Delta} &F(\mathbb{Z}/2_+,X) \ar[r]_{\delta} &X\\
}$$
This is a map of cofibrations and the bottom row is induced by the non-equivariant version of the cofibration (\ref{cof2}) on $X$. The map $\Delta$ is the trivial $\mathbb{Z}/2$-Galois extension over $X$ ($\Delta$ is the diagonal inclusion) and the homotopy cofiber $\delta$ can be identified with $X$.

This is a diagram of $XR$-algebras. Inducing the upper row along $XR \rightarrow X$, gives the following map between cofiber sequences. Furthermore, by adjuncton, this is a commutative diagram of $X$-algebras.

$$
\xymatrix{
X\wedge_{XR}XR \ar[r] \ar[d]_{\simeq} &X\wedge_{XR}X \ar[r] \ar[d] &X\wedge_{XR}\Sigma^{\lambda(n)+1}XR \ar[d]^{\simeq}\\
X \ar[r]_{\Delta} &\Pi_{\mathbb{Z}/2}X \ar[r]_{\delta} &X\\
}$$

The right hand column is an equivalence since $X\wedge_{XR}\Sigma^{\lambda(n)+1}XR \simeq \Sigma^{\lambda(n)+1}X \simeq X$. This follows from the fact that $X$ is $|v_n|=2(2^n-1)$ periodic and $2(2^n-1)(2^n-1) = \lambda(n)+1$. Therefore $h$ is an equivalence.

Finally, $XR \rightarrow X$ is faithful. If $N$ is a $XR$-module then applying to the cofibration (\ref{cof}) we get a cofibration 
$$\xymatrix{
\Sigma^{\lambda(n)}N \ar[r]^{x(n)} &N \ar[r] &N\wedge_{XR}X.
}$$
If we assume $N\wedge_{XR}X \simeq *$, then $x(n):\Sigma^{\lambda(n)}N \rightarrow N$ is an equivalence. But $x(n)$ is nilpotent. Therefore $N \simeq *$.

\end{proof}

\appendix 

\section{Borel spectral sequence for $E(n;\bold{u})_{\mathbb{R}}$}\label{comps}
 
In this section we compute the coefficient ring of the Borel spectrum of $E(n;\bold{u})_{\mathbb{R}}$. Hu an Kriz have computed the Borel spectral sequence for $BP_{\mathbb{R}}$ and $E(n)_{\mathbb{R}}$. Here we recall their results.

\begin{definition}\label{E_infty}
Let $\bold{u}=u_0,u_1,\ldots,u_n,\ldots$ be a regular sequence in $BP_*$ such that $|u_i| = 2(p^i-1)$ and $(p,u_1,u_2,\ldots,u_{n-1})=I_n$. Define the following ring.
$$E_{\infty}^c((BP;\bold{u})_{\mathbb{R}}) = \mathbb{Z}_{(2)}[u_n\sigma^{l2^{n+1}}, a | l \in \mathbb{Z}, n \geq 0]/I(\bold{u})$$

The ideal $I(\bold{u})$ is generated by the following relations,
\begin{equation}
\begin{split}
u_0 &=2\\
\left(u_n\sigma^{l2^{n+1}}\right) a^{2^{n+1}-1} &= 0\\
\left(u_m\sigma^{k2^{m+1}}\right) \left(u_n\sigma^{l2^{m-n}2^{n+1}}\right)  &= u_nu_m \sigma^{(k+l)2^{m+1}}\\
\end{split}
\end{equation}

where the $u_n\sigma^{l2^{n+1}}$ has bidegree $(2^n-1)(1+\alpha) + l2^{n+1}(\alpha - 1)$ and $a$ has bidegree $-\alpha$. 
\end{definition}

\begin{theorem}(Hu,Kriz \cite{HK})
The Borel spectral sequence for $BP_{\mathbb{R}}$ is $$H^*(\mathbb{Z}/2, BP_*[\sigma^{\pm}]) \Rightarrow \pi_{\star}c(BP_{\mathbb{R}})$$ where $\sigma \in \pi_{1-\alpha}F(\mathbb{Z}/2_+,BP_{\mathbb{R}})$ coming from the homeomorphism $S^1\wedge \mathbb{Z}/2_+ \simeq S^{\alpha}\wedge \mathbb{Z}/2_+$. The differentials are 
\begin{equation}
d_{2^{k+1}-1}(\sigma^{-2^k}) = v_ka^{2^{k+1}-1}
\end{equation}
where $a \in \pi_{\alpha}BP_{\mathbb{R}}$ comes from the embedding $S^0 \subset S^{\alpha}$.

The $E_{\infty}$-page is $E_{\infty}^c((BP;\bold{v})_{\mathbb{R}})$. Furthermore, there are no extension problems. Therefore, 

$$c(BP_{\mathbb{R}})_{\star} \simeq E_{\infty}^c((BP;\bold{v})_{\mathbb{R}}). $$

\end{theorem}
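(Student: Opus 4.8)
The plan is to run the $\mathbb{Z}/2$-homotopy fixed point (Borel) spectral sequence for the conjugation action on $BP_{\mathbb{R}}$, treating it as the Real-oriented generalization of the classical $\mathbb{Z}/2$-homotopy fixed point spectral sequence computing $KO$ from $KU$. Since $c(BP_{\mathbb{R}}) = F(E\mathbb{Z}/2_+, BP_{\mathbb{R}})$, I would filter by the skeleta of the model $E\mathbb{Z}/2 = S(\infty\alpha)$, whose one-free-cell-per-dimension structure yields a convergent spectral sequence whose $E_2$-page is the group cohomology of the coefficients. Convergence to $\pi_{\star}c(BP_{\mathbb{R}})$ is formal from completeness of this filtration.

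First I would identify the input. The cofree spectrum $F(\mathbb{Z}/2_+, BP_{\mathbb{R}})$ is induced from its underlying spectrum $BP$, so $\pi_{\star}F(\mathbb{Z}/2_+, BP_{\mathbb{R}}) = BP_*[\sigma^{\pm}]$, where the invertibility of $\sigma \in \pi_{1-\alpha}$ records the homeomorphism $S^1\wedge \mathbb{Z}/2_+ \simeq S^{\alpha}\wedge \mathbb{Z}/2_+$ that trivializes the sign representation after inducing. Complex conjugation fixes each Hazewinkel generator (each $v_k$ is the restriction of the genuine equivariant class in $\pi_{(2^k-1)(1+\alpha)}BP_{\mathbb{R}}$), so it acts trivially on $BP_* = \mathbb{Z}_{(2)}[v_1, v_2,\ldots]$ and by $\sigma \mapsto -\sigma$. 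Computing $H^*(\mathbb{Z}/2; BP_*[\sigma^{\pm}])$ with this action then produces the $E_2$-page: a ring over $BP_*$ generated by the invertible even powers $\sigma^{\pm 2}$ together with the polynomial Euler class $a$ subject to $2a=0$, the generator of positive cohomological filtration. In the paper's indexing the passage $\sigma\mapsto -\sigma$ is recorded by the bottom differential $d_1(\sigma^{-1}) = 2a = v_0 a$, which is forced by the cofiber sequence $\mathbb{Z}/2_+ \to S^0 \to S^{\alpha}$ (second map $a$) and is exactly the relation $u_0 = 2$.

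The heart of the argument is the higher family $d_{2^{k+1}-1}(\sigma^{-2^k}) = v_k a^{2^{k+1}-1}$ for $k\geq 1$, which I would establish by induction on $k$. Multiplicativity guarantees that $\sigma^{-2^k} = (\sigma^{-2^{k-1}})^2$ survives past the shorter differential supported by its square root, so it can first die only on page $2^{k+1}-1$; the target is then pinned to $v_k a^{2^{k+1}-1}$ by the Real formal group law carried by $BP_{\mathbb{R}}$ together with a count of the bidegrees in which $v_k$ lives, namely $(2^k-1)(1+\alpha)$. These are precisely the slice differentials in the sense of Hill--Hopkins--Ravenel, so either that slice-theorem input or Hu and Kriz's original formal-group computation supplies the identification of targets. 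This step is the main obstacle: convergence and the $E_2$-computation are formal, but certifying that $\sigma^{-2^k}$ hits \emph{exactly} $v_k a^{2^{k+1}-1}$, and not a decomposable class or a different multiple, is where the genuine Real-oriented content enters.

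Finally I would read off $E_{\infty}$. Each differential makes $v_n a^{2^{n+1}-1}$ a boundary, and by multiplicativity each of its permanent-cycle translates $(v_n\sigma^{l2^{n+1}})a^{2^{n+1}-1}$ becomes a boundary as well; this yields exactly the nilpotence relations of the ideal $I(\bold{v})$. The surviving classes are the products $v_n\sigma^{l2^{n+1}}$ together with $a$, whose multiplicative relations match those imposed in $E_{\infty}^c((BP;\bold{v})_{\mathbb{R}})$. Invoking Hu and Kriz's verification that there are no multiplicative extensions then identifies the bigraded ring $c(BP_{\mathbb{R}})_{\star}$ with $E_{\infty}^c((BP;\bold{v})_{\mathbb{R}})$, as claimed.
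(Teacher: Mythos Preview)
The paper does not actually prove this statement: it is recorded as a theorem of Hu and Kriz \cite{HK} and quoted without argument, serving only as input to the subsequent proposition on the generators $\bold{u}$. Your sketch is a reasonable outline of the Hu--Kriz computation and is broadly faithful to their method, but there is no proof in the paper against which to compare it; the author treats the result as a black box from the literature.
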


In the following proposition we show that running the Borel spectral sequence with different generators $\bold{u}$ for $BP_*$ gives similar results.

\begin{prop}\label{BP}
For any $\bold{u}$ in Def.\ref{E_infty}, 
$$c(BP_{\mathbb{R}})_{\star} \simeq E_{\infty}^c((BP;\bold{u})_{\mathbb{R}}).$$
\end{prop}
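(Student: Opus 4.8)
The plan is to compare the Borel spectral sequence computed with the Hazewinkel generators $\bold{v}$ (whose outcome is given by Hu--Kriz) with the one computed with an arbitrary regular sequence $\bold{u}$, and to show that the change-of-generators passes through to an isomorphism of $E_\infty$-pages. Since both spectral sequences have the same $E_2$-page, namely $H^*(\mathbb{Z}/2, BP_*[\sigma^{\pm}])$ (the spectral sequence only depends on $BP_{\mathbb{R}}$ and the Postnikov/skeletal filtration of $E\mathbb{Z}/2$, not on a choice of polynomial generators for $BP_*$), the point is entirely about identifying the differentials and the resulting $E_\infty$-term in the two coordinate systems.

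First I would recall that $\bold{u}$ and $\bold{v}$ are both regular sequences generating the same invariant prime ideal filtration: $(p, u_1, \dots, u_{n-1}) = I_n = (p, v_1, \dots, v_{n-1})$ for every $n$. Hence modulo $I_n$ one has $u_n \equiv v_n$ up to a unit in $BP_*/I_n$; more precisely $u_n = \lambda_n v_n + (\text{decomposables in } I_n)$ with $\lambda_n \in \mathbb{Z}_{(2)}^\times$, and in fact working $2$-locally one checks $\lambda_n \equiv 1$ since all these generators reduce to the same element of $H_*(BP;\mathbb{F}_2)$. The key step is then to feed this into the differential formula: Hu--Kriz give $d_{2^{k+1}-1}(\sigma^{-2^k}) = v_k a^{2^{k+1}-1}$, and I claim that in the $\bold{u}$-coordinates the differential reads $d_{2^{k+1}-1}(\sigma^{-2^k}) = u_k a^{2^{k+1}-1}$. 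This follows because the correction terms $v_k - u_k$ lie in $I_k$, and in the spectral sequence the elements of $I_k \cdot a^{2^{k+1}-1}$ have already been killed by the earlier differentials $d_{2^{j+1}-1}$ for $j<k$ (the classes $v_j a^{2^{j+1}-1}$, equivalently $u_j a^{2^{j+1}-1}$, are boundaries, and multiplying the relation $d_{2^{j+1}-1}(\sigma^{-2^j}) = v_j a^{2^{j+1}-1}$ by $a^{2^{k+1}-2^{j+1}}$ and appropriate $BP_*$-classes shows the decomposable part of the correction is a boundary on an earlier page). So on the $E_{2^{k+1}-1}$-page the two differentials agree, inductively on $k$.

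Granting the differentials, the $E_\infty$-page is computed exactly as in Hu--Kriz but with $v_n$ replaced by $u_n$ throughout: one obtains the ring generated by the surviving classes $u_n\sigma^{l2^{n+1}}$ and $a$, subject to the relations in $I(\bold{u})$ — the relation $u_0 = 2$ from $p=2$, the nilpotence $(u_n\sigma^{l2^{n+1}})a^{2^{n+1}-1}=0$ coming directly from the differential, and the multiplicative relations among the $u_n\sigma^{l2^{n+1}}$ which are formal consequences of the definition of $\sigma$ and hold identically in any coordinates. This is precisely $E_\infty^c((BP;\bold{u})_{\mathbb{R}})$ of Definition \ref{E_infty}. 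Finally, since the $E_\infty$-page is still concentrated so that each total degree has a finite associated graded, and since the multiplicative extension problems were resolved by Hu--Kriz by an argument (comparison of the $a$-localized spectral sequence with the geometric fixed points, plus the absence of higher filtration) that is insensitive to the choice of generators, there are no extension problems here either, and we conclude $c(BP_{\mathbb{R}})_{\star} = E_\infty^c((BP;\bold{u})_{\mathbb{R}})$.

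\textbf{Main obstacle.} The delicate point is the inductive verification that the correction terms $v_k - u_k \in I_k$, when multiplied by $a^{2^{k+1}-1}$, are already boundaries on the page where $d_{2^{k+1}-1}$ acts — i.e.\ that changing generators genuinely does not alter the $E_\infty$-term rather than merely the $E_2$-presentation. One must be careful that the decomposable corrections, which involve products like $v_j \cdot (\text{stuff})$ with $j<k$, are hit by the lower differentials after the appropriate multiplication, and that no new differential or new permanent cycle is introduced; tracking the internal bidegrees (using $|u_i|=2(2^i-1)$, the same as $|v_i|$) is what makes this work, but it requires genuine bookkeeping rather than a one-line appeal to naturality.
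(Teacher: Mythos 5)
Your proposal matches the paper's proof: both write $u_k \equiv \alpha_k v_k \ (\mathrm{mod}\ I_k)$ with $\alpha_k$ a unit, observe that the $I_k$-correction terms multiplied by the appropriate power of $a$ are already zero on the $E_{2^{k+1}-1}$-page so the differentials agree up to units, and conclude that the $E_\infty$-page is $E_{\infty}^c((BP;\bold{u})_{\mathbb{R}})$. The only difference is in the last step, where you appeal to the Hu--Kriz resolution of extension problems being insensitive to the choice of generators, while the paper instead verifies explicitly by induction that $I(\bold{u}) = I(\bold{v})$, so that the two candidate $E_\infty$-rings literally coincide as subquotients of $BP_*[\sigma^{\pm}][a]$ and the extension-free conclusion transfers.
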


\begin{proof} 

We compute the Borel spectral sequence for $BP_{\mathbb{R}}$  starting with the generators $\bold{u}$ of $BP_*$.

Since $(2,u_1,\ldots, u_k) = (2,v_1,\ldots, v_k)$, $u_k=\alpha_kv_k \,\,\hbox{mod}\,I_n$ where $\alpha_k$ is a unit. In the $E_{2^{k+1}-1}$-page
\begin{equation}
\begin{split}
u_ka^{2^k-1} &= \left(\alpha_kv_k + \Sigma_0^{k-1}x_iv_i\right) a^{2^k-1}\\
&=\alpha_kv_ka^{2^k-1}\\
\end{split}
\end{equation}

since the other terms are zero. Therefore we can rewrite the differentials as 

\begin{equation} \label{diffBSS}
d_{2^{k+1}-1}(\sigma^{-2^k}) = v_ka^{2^{k}-1}a^{2^{k+1}-2^k} = \alpha_k^{-1}u_ka^{2^{k+1}-1}.
\end{equation}

Since the differentials don't change under the new generators (up to a unit), the $E_{\infty}$-page is $E_{\infty}^c((BP;\bold{u})_{\mathbb{R}})$.

Next, we have to show there are no extension problems. For this it is enough to show that $E_{\infty}^c((BP;\bold{u})_{\mathbb{R}})$ and $E_{\infty}^c((BP;\bold{v})_{\mathbb{R}})$ have the same multiplicative structure. 

We have to show $I(\bold{u}) = I(\bold{v})$. We have $u_0=v_0=2$. To show, $$0 = v_n \sigma^{l2^{n+1}}a^{2^{n+1}-1} = u_n\sigma^{l2^{n+1}}a^{2^{n+1}-1}.$$
Proceed by induction on $n$. For $n=1$, $u_1 = v_1 \,\,\hbox{mod}(2)$, let $u_1 = \alpha_1v_1 + 2x$, then $u_1\sigma^{l4}a^{3} = (\alpha_1v_1 + 2x)\sigma^{4l}a^{3} = \alpha_1v_1\sigma^{4l}a^{3} + x2\sigma^{4l}a^{3}$. But $v_0\sigma^{2l}a = 0 \Rightarrow x2\sigma^{4l}a^{3} = 0$. 

$u_n = \alpha_nv_n \,\, \hbox{mod}\,I_n \Rightarrow $
\begin{equation}
\begin{split}
u_n\sigma^{l2^{n+1}}a^{2^{n+1}-1} &= (\alpha_nv_n + \Sigma_0^{n-1} x_iv_i)\sigma^{l2^{n+1}}a^{2^{n+1}-1}\\
&= \alpha_nv_n\sigma^{l2^{n+1}}a^{2^{n+1}-1}\\
\end{split}
\end{equation}
by induction hypothesis. 

To show, $\left( u_m\sigma^{k2^{m+1}} \right) \left(u_n\sigma^{l2^{n+1}} \right) = u_nu_m \sigma^{k2^{m+1} + l2^{n+1}}$. Since $u_n = \alpha_nv_n \,\,\hbox{mod}\,I_n$ and $u_m = \alpha_mv_m \,\,\hbox{mod}\,I_m$, the left hand side 

\begin{equation}
\begin{split}
\left( u_m\sigma^{k2^{m+1}} \right) \left(u_n\sigma^{l2^{n+1}} \right)
&=\left( (\alpha_mv_m + \Sigma_0^{m-1}x_iv_i) \sigma^{k2^{m+1}} \right) \left( (\alpha_nv_n + \Sigma_0^{n-1}y_jv_j)\sigma^{l2^{n+1}} \right)\\
 &= \left( \alpha_mv_m \sigma^{k2^{m+1}} \right) \left( \alpha_nv_n \sigma^{l2^{n+1}} \right) + \Sigma_{0,0}^{m-1,n-1} \left(x_iv_i\sigma^{k2^{m+1}}\right)\left(y_jv_j\sigma^{l2^{n+1}}\right)\\
&= \alpha_m\alpha_nv_mv_n\sigma^{k2^{m+1}+l2^{n+1}} + \Sigma_{0,0}^{m-1,n-1} \left(x_iy_jv_iv_j\sigma^{k2^{m+1}+l2^{n+1}}\right)\\
&= \left( \alpha_m\alpha_nv_mv_n + \Sigma_{0,0}^{m-1,n-1}x_iy_jv_iv_j \right) \sigma^{k2^{m+1}+l2^{n+1}}\\
&= u_mu_n\sigma^{k2^{m+1}+l2^{n+1}}.\\
\end{split}
\end{equation}

\end{proof}

As a corollary we obtain the coefficient ring of the Borel spectrum of $E(n;\bold{u})_{\mathbb{R}}$.

\begin{cor}\label{BERnu} The $\mathbb{Z}/2$-equivariant homotopy of the generalized Real Johnson-Wilson spectrum is given by,
$$\pi_{\star}c(E(n;\bold{u})_{\mathbb{R}}) = \mathbb{Z}_{(2)}[u_k\sigma^{l2^{k+1}},a,u_n^{\pm 1}, \sigma^{\pm 2^{n+1}}]/I(\bold{u}), \,\,\, l\in \mathbb{Z}, n,k \geq 0.$$ 
\end{cor}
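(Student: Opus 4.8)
The plan is to obtain $\pi_{\star}c(E(n;\bold{u})_{\mathbb{R}})$ from Proposition \ref{BP} by the same localization procedure that defines $E(n;\bold{u})$ from $BP$, carried out on the level of Borel spectra. Recall that $E(n;\bold{u})$ is built from $BP$ by first killing the ideal $(u_i : i \geq n+1)$ and then inverting $u_n$; since the lifts $u_i \in \pi_{\star}BP_{\mathbb{R}}$ restrict to the classical $u_i$ on underlying spectra, the real-oriented spectrum $E(n;\bold{u})_{\mathbb{R}}$ is constructed the same way from $BP_{\mathbb{R}}$, and the Borel functor $c(-) = F(E\mathbb{Z}/2_+, -)$ commutes with the relevant (co)fiber sequences and filtered colimits. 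So the first step is to record that $c(E(n;\bold{u})_{\mathbb{R}})$ is obtained from $c(BP_{\mathbb{R}})$ by killing the classes $u_i$ for $i \geq n+1$ and inverting $u_n$.

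Next I would run this through the explicit description $c(BP_{\mathbb{R}})_{\star} = E_{\infty}^c((BP;\bold{u})_{\mathbb{R}})$ from Proposition \ref{BP}. Killing $u_i$ for $i \geq n+1$ removes all the generators $u_i\sigma^{l2^{i+1}}$ with $i \geq n+1$ (and the mixed relations involving them become vacuous), leaving exactly the subring generated by $a$ and the $u_k\sigma^{l2^{k+1}}$ for $0 \leq k \leq n$, modulo $I(\bold{u})$. Then inverting $u_n$: from the mixed relation $(u_n\sigma^{k2^{n+1}})(u_n\sigma^{l2^{n+1}}) = u_n^2\sigma^{(k+l)2^{n+1}}$ one sees that inverting $u_n$ forces $\sigma^{2^{n+1}}$ to become invertible as well, since $u_n \cdot (u_n^{-1})^2 \cdot u_n\sigma^{2^{n+1}} $ pairs up the shifts; concretely $u_n\sigma^{2^{n+1}}$ and $u_n\sigma^{-2^{n+1}}$ multiply (up to $u_n^2$) to give $u_n^2$, so $\sigma^{\pm 2^{n+1}}$ is adjoined. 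This yields precisely $\mathbb{Z}_{(2)}[u_k\sigma^{l2^{k+1}}, a, u_n^{\pm 1}, \sigma^{\pm 2^{n+1}}]/I(\bold{u})$ with $l \in \mathbb{Z}$ and $0 \leq k \leq n$, which is the asserted answer. (Note the statement's indexing $n, k \geq 0$ should be read as $0 \leq k \leq n$, the surviving range after truncation.)

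I would also check there are no convergence or extension issues introduced by the localization: the spectral sequence for $E(n;\bold{u})_{\mathbb{R}}$ is obtained from that of $BP_{\mathbb{R}}$ by the exact base-change/localization functors, which are flat on $E_2$-pages in the relevant sense (killing a regular element and inverting an element are both flat up to the Koszul correction already accounted for in $I(\bold{u})$), so the $E_{\infty}$-page computation and the no-extensions statement of Hu--Kriz propagate. The differential (\ref{diffBSS}) is unchanged except that $d_{2^{n+1}-1}(\sigma^{-2^n})$ now involves the invertible $u_n$, forcing $a^{2^{n+1}-1}=0$ as already used in Theorem \ref{comp}; the higher differentials $d_{2^{k+1}-1}$ for $k \geq n+1$ are killed along with their targets.

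The main obstacle I anticipate is the bookkeeping around inverting $u_n$: one must verify carefully that the localized ring is correctly presented by adjoining $\sigma^{\pm 2^{n+1}}$ rather than some finer or coarser power of $\sigma$, using the mixed multiplicative relations in $I(\bold{u})$ to identify exactly which monomials $u_n^j\sigma^m$ become units. A secondary subtlety is making sure the regular sequence hypotheses on $\bold{u}$ (that $(2,u_1,\dots,u_{n-1}) = I_n$) are genuinely used only to know $u_k \equiv \alpha_k v_k \bmod I_k$, so that Proposition \ref{BP} applies to the truncated/localized situation unchanged; once that is in hand the corollary is a formal consequence.
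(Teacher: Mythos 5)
The paper states this corollary without proof, as an immediate consequence of Proposition \ref{BP}: one runs the Borel spectral sequence for $E(n;\bold{u})_{\mathbb{R}}$ with the differentials (\ref{diffBSS}) inherited by naturality from $BP_{\mathbb{R}}$, and the invertibility of $u_n$ forces $a^{2^{n+1}-1}=0$ and makes $\sigma^{\pm 2^{n+1}}$ a unit on the $E_{\infty}$-page. Your third paragraph is essentially this intended argument, and your bookkeeping of which generators survive and why $\sigma^{2^{n+1}}$ becomes invertible is correct, as is your observation that the indexing should be read as $0\leq k\leq n$. One caveat: your opening claim that $c(-)=F(E\mathbb{Z}/2_+,-)$ ``commutes with filtered colimits'' is false in general --- $E\mathbb{Z}/2_+$ is an infinite complex, and the failure of the Borel and Tate constructions to commute with telescopic localizations such as inverting $u_n$ is precisely why statements like Theorem \ref{comp}(iii) have content. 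So the first paragraph of your argument, taken literally, does not establish that $c(E(n;\bold{u})_{\mathbb{R}})$ is the localization of $c(BP_{\mathbb{R}})$, and you should not route the proof through that claim. The load-bearing step must be the one you give afterwards: compute the Borel spectral sequence of $E(n;\bold{u})_{\mathbb{R}}$ directly. Its $E_2$-page is $H^*(\mathbb{Z}/2;E(n;\bold{u})_*[\sigma^{\pm}])$ (group cohomology of a finite group does commute with killing regular sequences and with inverting elements in the coefficients), the differentials are determined by the ring map from $BP_{\mathbb{R}}$, and strong convergence follows from the horizontal vanishing line created by $a^{2^{n+1}-1}=0$ at $E_{2^{n+1}}$, after which the no-extension argument of Proposition \ref{BP} applies verbatim. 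With that substitution your proof is complete and agrees with what the paper intends.
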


\end{document}